\tikzset{mytext/.style={font=\small, text=black}}
\newtheorem{Theorem}{Theorem}
\newtheorem{Conjecture}{Conjecture}
\newtheorem{Corollary}[Conjecture]{Corollary}
\newtheorem{proposition}{Proposition}[section]
\newtheorem{lemma}[proposition]{Lemma}
\newtheorem{corollary}[proposition]{Corollary}
\newtheorem{theorem}[proposition]{Theorem}
\newtheorem{fact}[proposition]{Fact}
\newtheorem{Question}[Conjecture]{Question}
\theoremstyle{definition}
\newtheorem{definition}[proposition]{Definition}
\numberwithin{equation}{section}
\title{Cyclicity, hypercyclicity and randomness in self-similar groups}
\author{Jorge Fariña-Asategui}
\address{Jorge Fariña-Asategui: Centre for Mathematical Sciences, Lund University, 223 62 Lund, Sweden -- Department of Mathematics, University of the Basque Country UPV/EHU, 48080 Bilbao, Spain}
\email{jorge.farina\_asategui@math.lu.se}
\keywords{Self-similar, fractal, hypercyclic, ergodic, strongly mixing, automata}
\subjclass[2020]{Primary: 20E08, 37A30. Secondary: 20P05, 22F50}
\thanks{The author is supported by the Spanish Government, grant PID2020-117281GB-I00, partly with FEDER funds. The author also acknowledges support from the Walter Gyllenberg Foundation from the Royal Physiographic Society of Lund}
\begin{document}

\begin{abstract}
We introduce the concept of cyclicity and hypercyclicity in self-similar groups as an analogue of cyclic and hypercyclic vectors for an ope-rator on a Banach space. We derive a sufficient condition for cyclicity of non-finitary automorphisms in contracting discrete automata groups. In the profinite setting we prove that fractal profinite groups may be regarded as measure-preserving dynamical systems and derive a sufficient condition for the ergodicity and the mixing properties of these dynamical systems. Furthermore, we show that a Haar-random element in a super strongly fractal profinite group is hypercyclic almost surely as an application of Birkhoff's ergodic theorem for free semigroup actions.
\end{abstract}

\maketitle

\section{introduction}
\label{section: introduction}

Self-similar groups are strongly connected to dynamical systems. A great example of this connection are the iterated monodromy groups of complex polynomials, which were used by Bartholdi and Nekrashevych to answer a well-known open problem in complex dynamics \cite{Thurston}. 

Here our aim is to establish a connection between self-similar profinite groups and measure-preserving dynamical systems. We start by regarding a profinite group~$G$ as a probability space via its unique Haar measure $\mu_G$. Assume further that $G$ is a self-similar subgroup of $\mathrm{Aut}~T$ for some regular rooted tree $T$. Then we may regard~$T$ as a free monoid and define an action $\mathcal{T}$ of $T$ on $(G,\mu_G)$ by considering sections, i.e. each $v\in T$ yields an operator $\mathcal{T}_v$ given by $\mathcal{T}_v(g):=g|_v$ for every $g\in G$; see \cref{section: Preliminaries} for the unexplained terms here and elsewhere in the introduction. A natural question here is whether this action $\mathcal{T}$ of $T$ on $(G,\mu_G)$ is measure-preserving. If so, then the tuple $(G,\mu_G,\mathcal{T},T)$ becomes a measure-preserving dynamical system and we are further interested on the ergodicity and the mixing properties of this dynamical system. We obtain the following:

\begin{Theorem}
\label{Theorem: fractal and ergodic}
    Let $G\le \mathrm{Aut}~T$ be a self-similar profinite group:
    \begin{enumerate}[\normalfont(i)]
        \item if $G$ is fractal then $(G,\mu_G,\mathcal{T},T)$ is a measure-preserving dynamical system;
        \item if $G$ is super strongly fractal then $(G,\mu_G,\mathcal{T},T)$ is ergodic and strongly mixing.
    \end{enumerate}
\end{Theorem}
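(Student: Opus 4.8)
\emph{Proof strategy.} Being a measure-preserving dynamical system means that every section operator $\T_v\colon g\mapsto g|_v$ preserves $\mu_G$; since $g|_{vw}=(g|_v)|_w$ yields $\T_{vw}=\T_w\circ\T_v$ and composites of measure-preserving maps are measure-preserving, I would first reduce to the generators $\T_x$ with $x$ a first-level vertex. For (i): fix $x$, let $G_x\le G$ be its stabilizer (open, of finite index $m=$ the length of the orbit of $x$), and write $G=\bigsqcup_{i=1}^{m}g_iG_x$. On $g_iG_x$ the section formula gives $\T_x(g_ih)=g_i|_x\cdot\varphi_x(h)$, where $\varphi_x\colon G_x\to G$, $h\mapsto h|_x$, is a continuous homomorphism (each element of $G_x$ fixes $x$) that is \emph{surjective} because $G$ is fractal. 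I would then invoke the standard fact that a continuous surjective homomorphism of profinite groups pushes Haar measure to Haar measure, so $(\varphi_x)_\ast\mu_{G_x}=\mu_G$; combining this with left-invariance of $\mu_G$ and with the identification of the normalized restriction of $\mu_G$ to $g_iG_x$ with $\mu_{G_x}$ gives, in one line, $\mu_G\big(\T_x^{-1}(A)\cap g_iG_x\big)=\tfrac1m\mu_G(A)$, and summing over $i$ yields $\mu_G(\T_x^{-1}(A))=\mu_G(A)$.

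For (ii) the plan is to establish the sharper statement that $\mu_G\big(\T_v^{-1}(A)\cap B\big)=\mu_G(A)\mu_G(B)$ for \emph{every} $v$ with $|v|\ge n$ whenever $A$ and $B$ are both unions of cosets of $\st_G(n)$. Since such sets form an algebra generating the Borel $\sigma$-algebra of $G$ and each $\T_v$ is already measure-preserving by (i), a routine triangle-inequality approximation then upgrades this to $\mu_G\big(\T_v^{-1}(A)\cap B\big)\to\mu_G(A)\mu_G(B)$ as $|v|\to\infty$ for all measurable $A,B$ — the natural notion of strong mixing for the free-monoid action $\T$ — which in turn specializes (take $B=A$ with $A$ invariant, so $\T_v^{-1}(A)=A$ mod $\mu_G$-null for all $v$) to $\mu_G(A)=\mu_G(A)^2$, i.e.\ ergodicity.

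The heart of (ii) is that super strong fractality propagates down the tree: iterating the level-one identity $\varphi_x(\st_G(m))=\st_G(m-1)$ along $v=x_1\cdots x_k$ via $\varphi_v=\varphi_{x_k}\circ\cdots\circ\varphi_{x_1}$ gives $\varphi_v(\st_G(m))=\st_G(m-k)$ for all $k\le m$, and in particular $\varphi_v\big(\st_G(k)\big)=G$ whenever $|v|=k$, with $\varphi_v|_{\st_G(k)}$ a continuous surjective homomorphism. Given this, fix $v$ with $|v|=k\ge n$, condition on the coset of $\st_G(k)$ containing $g$, and write $g=\gamma h$ with $\gamma$ a fixed representative and $h$ Haar-uniform on $\st_G(k)$: then $g|_v=\gamma|_v\cdot h|_v$ (since $h$ fixes $v$) and $h|_v=\varphi_v(h)$ is Haar-uniform on $\varphi_v(\st_G(k))=G$, so $g|_v$ is conditionally Haar-uniform on $G$, with conditional probability of lying in $A$ equal to $\mu_G(A)$ irrespective of the conditioning coset. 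Since membership of $g$ in $B$ is determined by its coset of $\st_G(n)$, hence by its (finer) coset of $\st_G(k)$, the events $\{g|_v\in A\}$ and $\{g\in B\}$ become independent, which is precisely the claimed equality.

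The main obstacle is not any individual computation — these are all short — but the level bookkeeping: fixing the precise definitions of \emph{fractal} and \emph{super strongly fractal} used here, and checking carefully that a section at a level-$k$ vertex of an element of $\st_G(m)$ lands in $\st_G(m-k)$ and that super strong fractality turns this inclusion into equality (hence $\varphi_v(\st_G(k))=G$ for $|v|=k$). Getting these level bounds exactly right is what makes the conditional-uniformity argument valid, and is where an error is most likely to slip in.
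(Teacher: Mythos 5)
Your proposal is correct, but it reaches the theorem by a genuinely different route than the paper. The paper works entirely inside the finite congruence quotients: a counting lemma computes $\#\,\varphi_\emptyset^{n+m}\big(\mathcal{T}_v^{-1}(\mathrm{St}_G(m))\big)$ and $\#\,\varphi_\emptyset^{n+m}\big(C_A\cap\mathcal{T}_v^{-1}(C_B)\big)$ coset by coset; measure preservation is then verified only on the level stabilizers and extended to all Borel sets via the $\pi$-system uniqueness of the Haar measure, and strong mixing is verified on cones and extended by the same approximation you invoke. You instead argue measure-theoretically: fractality makes $\varphi_x\colon\mathrm{st}_G(x)\to G$ a continuous surjective homomorphism, such homomorphisms of profinite groups push Haar measure to Haar measure, and a coset decomposition then gives $\mu_G(\mathcal{T}_x^{-1}(A))=\mu_G(A)$ for every Borel set $A$ at once (no $\pi$-system step), after which $\mathcal{T}_{vw}=\mathcal{T}_w\circ\mathcal{T}_v$ handles arbitrary vertices; in (ii), conditioning on the coset of $\mathrm{St}_G(k)$ shows $g|_v$ is conditionally Haar-uniform and independent of any event determined modulo $\mathrm{St}_G(n)$, which is exactly the paper's identity $\mu_G(C_A\cap\mathcal{T}_v^{-1}(C_B))=\mu_G(C_A)\mu_G(C_B)$ in probabilistic form. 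Your version is shorter and more conceptual and gives invariance on all Borel sets directly; the paper's is more elementary and self-contained, needing only counting in finite quotients.

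Two small points to repair in the write-up. First, the paper's convention is a right action with $(gh)|_v=(g|_v)(h|_{v^g})$, so with left cosets $g_i\,\mathrm{st}_G(x)$ your formula $\mathcal{T}_x(g_ih)=g_i|_x\,\varphi_x(h)$ is not what comes out: one gets $(g_ih)|_x=(g_i|_x)(h|_{x^{g_i}})$, and $h\mapsto h|_{x^{g_i}}$ on $\mathrm{st}_G(x)$ is neither a homomorphism nor controlled by fractality. Use right cosets $\mathrm{st}_G(x)g_i$ instead (Haar measure on a compact group is bi-invariant, so nothing is lost), giving $(hg_i)|_x=\varphi_x(h)\,g_i|_x$; in (ii) the analogous correction $(\gamma h)|_v=(\gamma|_v)(h|_{v^\gamma})$ is harmless, since $h\in\mathrm{St}_G(k)$ fixes every level-$k$ vertex and super strong fractality applies at $v^\gamma$ as well. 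Second, drop the proposed iteration through $\varphi_x(\mathrm{St}_G(m))=\mathrm{St}_G(m-1)$: that identity is not part of the paper's definition and is not obviously a consequence of it. The statement your argument actually needs, namely $\varphi_v(\mathrm{St}_G(k))=G$ for every vertex $v$ at level $k$, is verbatim the paper's definition of super strongly fractal, so no propagation argument is required.
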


In a general dynamical system describing the orbits of different points in the space is often of special interest. These orbits are closely related to the invariant subsets for a measure-preserving transformation. In the case of a single operator acting on a Banach space these invariant subspaces have been thoroughly studied and continue to be an active area of research in functional analysis, see \cite{Book, Enflo1, Lomonosov, Read}. General invariant subspaces are often studied via the so-called \textit{cyclic} and \textit{hypercyclic} vectors, i.e. those vectors which are not contained in a proper invariant subspace or subset respectively; see \cite{Cyclicity}. We define the analogue of cyclic and hypercyclic vectors for self-similar groups in terms of the natural action via sections of the free monoid~$T$ on $(G,\mu_G)$; see \cref{section: example} for precise definitions. The following questions arise naturally:

\begin{Question}
\label{Question: fractal contains cyclic}
    Does every fractal profinite group contain a cyclic automorphism? And a hypercyclic automorphism?
\end{Question}

By \cref{Theorem: fractal and ergodic} a super strongly fractal profinite group may be regarded as an ergodic measure-preserving dynamical system and therefore we may apply the machinery in ergodic theory to study Haar-random elements. Indeed, we shall make use of a generalized version of Birkhoff's theorem for free semigroup actions to prove the following:

\begin{Theorem}
\label{Theorem: super strongly fractal cyclic}
    Let $G$ be a super strongly fractal profinite group. Then a Haar-random element of~$G$ is hypercyclic with probability 1.
\end{Theorem}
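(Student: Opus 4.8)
The plan is to derive this from \cref{Theorem: fractal and ergodic} together with two soft facts about profinite groups: $G$ is second countable and its Haar measure has full support. Recall that $g\in G$ is hypercyclic precisely when its orbit $\mathcal{T}(g)=\{g|_v:v\in T\}$ under the section action is dense in $G$ (the closure of this orbit is the smallest closed $\mathcal{T}$-invariant subset containing $g$). Since $G\le\mathrm{Aut}~T$ with $T$ a regular rooted tree, $G$ is a second countable compact group; I would fix a countable basis $\{U_n\}_{n\ge 1}$ of nonempty open subsets of $G$, taken to be cosets of the open subgroups $\mathrm{St}_G(k)$, $k\ge 1$, so that $g$ is hypercyclic if and only if $\mathcal{T}(g)\cap U_n\neq\emptyset$ for every $n$. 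A super strongly fractal group is in particular fractal, so by \cref{Theorem: fractal and ergodic}(i) the system $(G,\mu_G,\mathcal{T},T)$ is measure-preserving, and by (ii) it is ergodic (and strongly mixing); moreover each $U_n$, being a nonempty open subset of the profinite group $G$, satisfies $\mu_G(U_n)=1/[G:\mathrm{St}_G(k)]>0$.

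For each $n$ I would then consider the bad set $A_n:=\{g\in G:g|_v\notin U_n\text{ for all }v\in T\}=\bigcap_{v\in T}\mathcal{T}_v^{-1}(G\setminus U_n)$, a closed (hence measurable) subset of $G$; the set of non-hypercyclic elements is exactly $\bigcup_{n\ge 1}A_n$, so it suffices to show $\mu_G(A_n)=0$ for every $n$. Taking $v$ to be the root (the identity of the monoid $T$) gives $A_n\subseteq G\setminus U_n$, hence $\mu_G(A_n)\le 1-\mu_G(U_n)<1$. On the other hand, if $g\in A_n$ then $(g|_x)|_w=g|_{xw}\notin U_n$ for every vertex $w$, so $g|_x\in A_n$; thus $A_n\subseteq\mathcal{T}_x^{-1}(A_n)$ for every first-level vertex $x$, and since $\mathcal{T}_x$ is measure-preserving and $\mu_G$ is finite this forces $\mu_G(\mathcal{T}_x^{-1}(A_n)\setminus A_n)=0$, i.e. $A_n$ is invariant modulo null sets under each generator of the action. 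Ergodicity of $(G,\mu_G,\mathcal{T},T)$ then gives $\mu_G(A_n)\in\{0,1\}$, and combining with $\mu_G(A_n)<1$ yields $\mu_G(A_n)=0$; a union over $n$ finishes the proof.

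An alternative for the last step, closer to the route advertised in the introduction, is to apply the pointwise ergodic theorem for free semigroup actions to the indicator $\mathbf 1_{U_n}$: ergodicity produces a full-measure set $G_n$ of elements $g$ along whose orbit the averages of $\mathbf 1_{U_n}$ converge to $\mu_G(U_n)>0$, so $\mathcal{T}(g)\cap U_n\neq\emptyset$ for every $g\in G_n$, and then $\bigcap_n G_n$ is a full-measure set of hypercyclic elements. Either way, the substantive input is \cref{Theorem: fractal and ergodic}; once ergodicity of the section action is in hand the present statement is a soft consequence. The only mild technical points I anticipate are upgrading the obvious \emph{forward} invariance of $A_n$ to genuine invariance modulo null sets so that ergodicity applies (this is exactly where finiteness of $\mu_G$ is used), and, on the Birkhoff route, extracting a \emph{single} full-measure set that works simultaneously for all basis elements $U_n$, which is just a countable intersection. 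I do not expect a serious obstacle beyond \cref{Theorem: fractal and ergodic} itself.
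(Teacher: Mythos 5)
Your primary argument is correct in substance and takes a genuinely different, softer route than the paper. The paper proves the theorem exactly along the lines of your ``alternative'': it fixes an $n$-pattern $P$, applies Birkhoff's ergodic theorem for free semigroup actions (\cref{theorem: Birkhoff theorem}) to the indicator of the cone $C_P$, uses ergodicity from \cref{Theorem: fractal and ergodic} to conclude that the Ces\`aro averages converge almost everywhere to the constant $\mu_G(C_P)>0$, and deduces that a Haar-random $g$ has (in fact infinitely many) sections with pattern $P$; density then follows by intersecting over the countably many patterns. Your first route dispenses with Birkhoff altogether: writing the non-hypercyclic elements as $\bigcup_n A_n$ with $A_n=\bigcap_{v\in T}\mathcal{T}_v^{-1}(G\setminus U_n)$, observing $A_n\subseteq G\setminus U_n$ (take $v=\emptyset$) and $A_n\subseteq \mathcal{T}_v^{-1}(A_n)$ for every $v$, and invoking a zero-one law. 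This is more elementary, needing only \cref{Theorem: fractal and ergodic}; the paper's Birkhoff route buys the extra quantitative information that the orbit of a random element visits each cone with positive asymptotic frequency, not merely at least once.

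One step of your first route needs a patch. The paper defines ergodicity only for \emph{strictly} invariant sets ($\mathcal{T}_v^{-1}(S)=S$ for all $v$), whereas your $A_n$ satisfies only $A_n\subseteq\mathcal{T}_v^{-1}(A_n)$ with null difference, and strict invariance genuinely fails for $A_n$ (a section $g|_x$ may avoid $U_n$ along its whole orbit while $g$ itself does not). Passing from almost invariance under each generator to the zero-one law is true but is not covered by the paper's literal definition, so as written the appeal to ergodicity has a gap. The quickest repair inside the paper's framework is to use the strong mixing statement of \cref{Theorem: fractal and ergodic}(ii) instead of bare ergodicity: since $A_n\subseteq\mathcal{T}_{x_1\dotsb x_k}^{-1}(A_n)$, we have $\mu_G\bigl(\mathcal{T}_{x_1\dotsb x_k}^{-1}(A_n)\cap A_n\bigr)=\mu_G(A_n)$ for every $k$, and letting $k\to\infty$ the mixing condition forces $\mu_G(A_n)=\mu_G(A_n)^2$, hence $\mu_G(A_n)\in\{0,1\}$, and $\mu_G(A_n)\le 1-\mu_G(U_n)<1$ gives $\mu_G(A_n)=0$. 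With that substitution (or with a separate proof that almost-invariant sets inherit the zero-one law for this free monoid action) your argument is complete; your second sketch is essentially the paper's own proof.
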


In particular \cref{Theorem: super strongly fractal cyclic} implies that every super strongly fractal profinite group contains a hypercyclic and therefore a cyclic automorphism, answering \cref{Question: fractal contains cyclic} in the case of super strongly fractal groups.

\begin{Corollary}
    Every super strongly fractal profinite group contains a hypercyclic automorphism.
\end{Corollary}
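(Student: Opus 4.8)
The plan is to derive the corollary as an immediate consequence of \cref{Theorem: super strongly fractal cyclic}. First I would observe that a probability space cannot have empty support, so if a Haar-random element of $G$ is hypercyclic with probability $1$, then in particular the set of hypercyclic automorphisms is non-empty: otherwise the event ``$g$ is hypercyclic'' would be the empty set, which has measure $0$, not $1$. Since $G$ is a non-trivial profinite group (hence $\mu_G$ is a genuine probability measure), this yields the existence of at least one hypercyclic automorphism $g \in G$.

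Next I would recall, from the definitions in \cref{section: example}, that a hypercyclic automorphism is in particular cyclic — any automorphism whose orbit closure under the section action is not contained in any proper closed invariant subset is, a fortiori, not contained in any proper closed invariant subgroup. Thus the same element $g$ witnesses the existence of a cyclic automorphism as well, so no separate argument is needed for that half of \cref{Question: fractal contains cyclic}.

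The only subtlety worth spelling out is that the conclusion really does require $G$ to be non-trivial (or at least that the relevant event be measurable with the stated measure), but this is already built into the hypothesis of \cref{Theorem: super strongly fractal cyclic} and into the convention that ``profinite group'' here carries its Haar probability measure; so there is no genuine obstacle. I do not expect any hard step: the entire content is in \cref{Theorem: super strongly fractal cyclic}, and this corollary is the standard ``positive probability implies existence'' extraction, recorded separately only because it answers \cref{Question: fractal contains cyclic} for the class of super strongly fractal groups.
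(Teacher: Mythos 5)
Your proposal is correct and matches the paper's own (implicit) argument: the paper records this corollary as an immediate consequence of \cref{Theorem: super strongly fractal cyclic}, since a set of Haar measure $1$ is in particular non-empty, exactly the ``positive probability implies existence'' extraction you describe. Your added remark that a hypercyclic automorphism is a fortiori cyclic also mirrors the paper's phrasing (``hypercyclic and therefore cyclic''), so there is nothing to add.
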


Another consequence of \cref{Theorem: super strongly fractal cyclic} is that Haar-random elements may not be used in the study of the self-similar Hausdorff spectrum of super strongly fractal profinite groups; compare \cite{AbertVirag, JorgeSpectra}. In fact, Abért and Virág proved in \cite[Theorem~7.2]{AbertVirag} that three random elements in the pro-$p$ Sylow subgroup $W_p$ of the automorphism group of the $p$-adic tree generate topologically a closed subgroup of Hausdorff dimension 1 in $W_p$. Thus by a recent result of the author \cite[Theorem~1.2]{JorgeSpectra}, 3-random generated subgroups of $W_p$ cannot be self-similar. \cref{Theorem: super strongly fractal cyclic} generalizes this result, showing that no topologically finitely generated random closed subgroup of $W_p$ can be contained in a proper self-similar closed subgroup of~$W_p$.

Lastly, \cref{Theorem: fractal and ergodic} also opens the door to applying methods from dynamical systems in the study of self-similar groups. The author together with Radi have recently utilized \cref{Theorem: fractal and ergodic} in \cite{JorgeSanti} to obtain  new results on the fixed-point proportion of super strongly fractal groups, which itself has a wide range of applications in arithmetic dynamics; compare \cite{FPP1,FFP3,FFP2,FFP4}.

\subsection*{\textit{\textmd{Organization}}} In \cref{section: Preliminaries} we introduce the background material needed in the subsequent sections. Cyclicity and hypercyclicity in self-similar groups are introduced in \cref{section: example} via a motivating example which is both an $\mathbb{F}_p$-vector space and a self-similar group. We further study cyclic automorphisms in discrete automata groups to conclude \cref{section: example}. Finally in \cref{section: fractal groups} we prove \textcolor{teal}{Theorems} \ref{Theorem: fractal and ergodic} and \ref{Theorem: super strongly fractal cyclic} showing an application of Birkhoff's ergodic theorem for free semigroup actions to the theory of self-similar profinite groups.

\subsection*{\textit{\textmd{Notation}}} An event in a probability space will be said to happen \textit{almost surely} if it happens with probability 1. We denote the finite field of $p$ elements by $\mathbb{F}_p$. Self-similar groups will be assumed to act on the tree on the right so composition will be written from left to right. We shall use exponential notation for group actions on the tree. Finally, we denote by $\# S$ the cardinality of a finite set $S$.

\subsection*{Acknowledgements} I would like to thank my advisors Gustavo A. Fernández-Alcober and Anitha Thillaisundaram for their valuable feedback which contributed greatly to the final version of this paper. Finally, I would like to thank the anonymous referee for their valuable comments.

\subsection*{Statements and declarations} There are no competing interests.

\section{Preliminaries}
\label{section: Preliminaries}

\subsection{The Haar measure on profinite groups}

Let $G$ be a profinite group. We define the \textit{Borel algebra} of $G$ as the $\sigma$-algebra generated by all the open subsets of~$G$. The elements of the Borel algebra of $G$ will be called \textit{Borel sets}. 

A \textit{$\pi$-system} on $G$ is a non-empty collection $\mathcal{A}$ of subsets of $G$ closed under finite intersections, i.e. if $A,B\in \mathcal{A}$ then $A\cap B\in \mathcal{A}$. In particular, a family of neighborhood bases for each point in $G$ forms a $\pi$-system. The $\sigma$-algebra generated by such a $\pi$-system is precisely the Borel algebra of $G$. We recall the following standard fact about $\pi$-systems:

\begin{fact}
\label{fact: pi systems}
    Let $\mu_1$ and $\mu_2$ be two probability measures on a set $X$ and let $P$ be a $\pi$-system on $X$. If for every $A\in P$ we have $\mu_1(A)=\mu_2(A)$, then $\mu_1(S)=\mu_2(S)$ for any set $S$ in the $\sigma$-algebra generated by the $\pi$-system $P$.
\end{fact}

A \textit{Haar measure} on $G$ is a measure $\mu$ on the Borel algebra of $G$ satisfying the following:
\begin{enumerate}[\normalfont(i)]
    \item it is finite and normalized, i.e. $\mu(G)=1$;
    \item it is left-translation invariant, i.e. $\mu(gS)=\mu(S)$ for every $g\in G$ and every Borel subset $S\subseteq G$;
    \item it is outer regular on Borel sets, i.e.
    $$\mu(S)=\inf\{\mu(U)~:~ S\subseteq U \text{ and }U\text{ open}\};$$
    \item it is inner regular on open sets, i.e.
    $$\mu(U)=\sup\{\mu(K)~:~ K\subseteq U \text{ and }K\text{ compact}\}.$$
\end{enumerate}

By Haar's theorem a profinite group $G$ admits a unique Haar measure, which we shall denote by $\mu_G$. From \cref{fact: pi systems} and both the left-translation invariance and the uniqueness of the Haar measure the following is immediate:

\begin{corollary}
\label{corollary: equality of measures}
    Let $\mu$ be a measure on the Borel algebra in $G$. If $\mu$ coincides with the Haar measure $\mu_G$ on a neighborhood basis of the identity in $G$, then $\mu=\mu_G$.
\end{corollary}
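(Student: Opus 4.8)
The plan is to exhibit a $\pi$-system that generates the Borel algebra of $G$ and on which $\mu$ and $\mu_G$ manifestly coincide, and then invoke \cref{fact: pi systems}. Recall that a profinite group admits a neighborhood basis $\mathcal{B}$ of the identity consisting of open normal subgroups closed under finite intersection (for instance, the family of all open normal subgroups), and that each $N\in\mathcal{B}$ has finite index $[G:N]$ in $G$ since $G$ is compact; in particular $G$ is the disjoint union of the finitely many cosets of $N$. I would take the neighborhood basis occurring in the hypothesis to be of this form.

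The first step is to promote the equality $\mu(N)=\mu_G(N)$, valid for $N\in\mathcal{B}$ by hypothesis, to an equality on all cosets. Fix $N\in\mathcal{B}$, put $k=[G:N]$, and write $G=\bigsqcup_{i=1}^{k}g_iN$. By left-translation invariance of $\mu_G$ all these cosets have equal $\mu_G$-measure, so $\mu_G(gN)=\mu_G(N)=1/k$ for every $g\in G$; since $\mu$ is likewise left-translation invariant and $\mu(N)=\mu_G(N)=1/k$, we also get $\mu(gN)=1/k$ for every $g\in G$. Hence $\mu$ and $\mu_G$ agree on the family $\mathcal{P}=\{\,gN:g\in G,\ N\in\mathcal{B}\,\}\cup\{\varnothing\}$. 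Now $\mathcal{P}$ is a $\pi$-system, because the intersection of two cosets $g_1N_1$ and $g_2N_2$ is either empty or exactly one coset of $N_1\cap N_2\in\mathcal{B}$; and since $\mathcal{B}$ is a neighborhood basis of the identity, $\mathcal{P}$ is a base of the topology of $G$ --- indeed a neighborhood basis at every point --- so the $\sigma$-algebra it generates is precisely the Borel algebra of $G$. Applying \cref{fact: pi systems} to $\mu$ and $\mu_G$ with the $\pi$-system $\mathcal{P}$ then yields $\mu=\mu_G$.

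The only step carrying real content is the propagation from the distinguished open subgroups to all of their cosets, and this is exactly where translation invariance is indispensable: a measure agreeing with $\mu_G$ merely on some nested basis of open subgroups need not equal $\mu_G$, so one genuinely uses that $\mu$, and not only $\mu_G$, respects the group law (equivalently, together with the hypothesis, that $\mu$ is a normalized left-invariant Borel measure). A more economical but less self-contained alternative would be to note that the hypothesis forces $\mu$ to be finite on an open set, hence finite, hence a scalar multiple of $\mu_G$ by uniqueness of the Haar measure, and then to evaluate on a single basis element to see that the scalar equals $1$; I would nonetheless favour the $\pi$-system argument since it relies only on \cref{fact: pi systems}.
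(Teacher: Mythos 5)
Your argument is correct and is essentially the paper's own: the paper offers no detailed proof, declaring the corollary immediate from \cref{fact: pi systems} together with left-translation invariance and uniqueness of the Haar measure, and your filled-in version (propagate the agreement from a basis of open subgroups to all of their cosets, note these cosets form a $\pi$-system generating the Borel algebra, then apply \cref{fact: pi systems}) is exactly that intended route, with your specialization to a subgroup basis matching the paper's actual application to the level stabilizers $\mathrm{St}_G(m)$. Your explicitly added hypothesis that $\mu$ is left-translation invariant is precisely the invariance the paper's one-line justification appeals to, and, as you observe, some such assumption is genuinely needed: the statement fails for arbitrary Borel probability measures, e.g.\ on $\mathbb{Z}_p$ the measure given by Haar measure restricted to $p\mathbb{Z}_p$ plus an atom of mass $1-\tfrac{1}{p}$ at $1$ agrees with $\mu_G$ on the neighborhood basis $\{p^n\mathbb{Z}_p\}_{n\ge 0}$ of the identity but is not $\mu_G$.
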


Note that for an open subgroup $H\le G$ we have $\mu_G(H)=|G:H|^{-1}$.

\subsection{Measure-preserving dynamical systems}

Let $(X,\mu)$ be a probability space and let $A$ be a monoid. We fix an action $\mathcal{T}$ of $A$ on $(X,\mu)$ and we say this monoid action is \textit{measure-preserving} if for any measurable subset $S\subseteq X$ and any $a\in A$ we have $\mu(\mathcal{T}_a^{-1}(S))=\mu(S)$, where $\mathcal{T}_a$ is the operator associated to the action of $a$. Then the tuple $(X,\mu,\mathcal{T},A)$ is called a \textit{measure-preserving dynamical system}.

A measurable subset $S\subseteq X$ such that $\mathcal{T}_a^{-1}(S)= S$ for every $a\in A$ is called \textit{$\mathcal{T}$-invariant}. We say that a measure-preserving dynamical system $(X,\mu,\mathcal{T},A)$ is \textit{ergodic} if for every $\mathcal{T}$-invariant measurable subset $S\subseteq X$ we have $\mu(S)\in \{0,1\}$. 

A stronger notion in ergodic theory is that of strong mixing. We adapt the usual definition for a single operator to the action of a free monoid $F$. We define a dynamical system $(X,\mu,\mathcal{T},F)$ to be \textit{strongly mixing} if for any pair $S_1,S_2$ of measurable subsets and any sequence $\{a_k\}_{k\ge 1}\subseteq F\setminus\{1\}$ the following limit exists:
$$\lim_{k\to\infty}\mu(\mathcal{T}_{a_1\dotsb a_k}^{-1}(S_1)\cap S_2)=\mu(S_1)\cdot \mu(S_2).$$
For a $\mathcal{T}$-invariant measurable subset $S$ the strong mixing condition implies that $\mu(S)=\mu(S)^2$, i.e. $\mu(S)\in \{0,1\}$. Thus strong mixing implies ergodicity:

\begin{proposition}
\label{proposition: strongly mixing implies ergodic}
    If a measure-preserving dynamical system $(X,\mu,\mathcal{T},F)$ is strongly mixing then it is ergodic.
\end{proposition}

\subsection{Self-similar groups}

For a natural number $m\ge 1$ and a finite set of $m$ symbols $\{1,\dotsc,m\}$, we define the \textit{free monoid on $m$ generators} as the monoid consisting of finite words with letters in $\{1,\dotsc,m\}$. The free monoid can be identified with the \textit{$m$-adic tree}, i.e. the rooted tree $T$ where each vertex has exactly $m$ immediate descendants. The words in~$T$ of length exactly $n$ form the \textit{$n$th level of} $T$. We may also use the term level to refer to the number $n$. For a vertex $v\in T$ we write $|v|$ for the length of $v$ as a word with letters in $\{1,\dotsc,m\}$.

Let $\mathrm{Aut~}T$ be the group of graph automorphisms of the $m$-adic tree $T$, i.e. those bijective maps on the set of vertices of $T$ preserving adjacency. Such automorphisms fix the root and permute the vertices at the same level of $T$. For any $1\le n\le \infty$, the \textit{$n$th truncated tree} $T_n$ consists of the vertices at distance at most $n$ from the root. Note that $T_\infty=T$. We denote the group of automorphisms of the $n$th truncated tree by $\mathrm{Aut~}T_n$.

Let $g\in\mathrm{Aut~}T$ and $v\in T$. For $1\le n\le \infty$ we define the \textit{section of $g$ at $v$ of depth }$n$ as the unique automorphism $g|_v^n\in\mathrm{Aut~}T_n$ such that $(vu)^g=v^gu^{g|_v^n}$ for every $u$ of length at most $n$. For $n=\infty$ we write  $g|_v$ for simplicity and call it the \textit{section of $g$ at $v$}, while for $n=1$ we shall call $g|_v^1$ the \textit{label of $g$ at $v$}. For every pair $g,h\in \mathrm{Aut}~T$, $v\in T$ and $n\ge 1$ we have the equality
$$(gh)|_v^n=(g|_v^n)(h|_{v^g}^n).$$

For every $n\ge 1$ the subset $\mathrm{St}(n)$ of automorphisms fixing all the vertices of the $n$th level of $T$ is a normal subgroup of finite index in $\mathrm{Aut~}T$, the \textit{$n$th level stabilizer}. Similarly, for any vertex $v\in T$ we define its \textit{vertex stabilizer} $\mathrm{st}(v)$ as the subgroup of automorphisms fixing the vertex $v$. The group $\mathrm{Aut~}T$ is a countably based profinite group with respect to the topology induced by its action on each level of the tree. We call this topology the \textit{congruence topology}. For each vertex~$v$ we shall define the continuous homomorphisms $\varphi_v:\mathrm{st}(v)\to \mathrm{Aut~}T$ given by $g\mapsto g|_v$ and, more generally, for every $1\le n\le \infty$ we define $\varphi_v^n:\mathrm{st}(v)\to \mathrm{Aut}~T_n$ given by $g\mapsto g|_v^n$. We further define the continuous isomorphism $\psi:\mathrm{Aut}~T\to \big(\mathrm{Aut~}T\times\overset{m}{\ldots}\times \mathrm{Aut~}T\big)\rtimes \mathrm{Sym}(m)$ given by $g\mapsto (g|_{1},\dotsc,g|_m)g|_\emptyset^1$. 

We define \textit{finitary automorphisms of depth $n$} as those automorphisms of the $m$-adic tree with trivial labels at the $k$th level of the tree for every $k\ge n$. Finitary automorphisms of depth 1 are called \textit{rooted automorphisms} and they may be identified with elements in the symmetric group $\mathrm{Sym}(m)$. More generally, the group of finitary automorphisms of depth $n$ is isomorphic to the group $\mathrm{Aut}~T_n$ of automorphisms of the $n$th truncated tree via the projection of the action of $\mathrm{Aut}~T$ to the $n$th truncated tree $T_n$.

For a subgroup $G\le \mathrm{Aut}~T$ we define $\mathrm{st}_G(v):=\mathrm{st}(v)\cap G$ and $\mathrm{St}_G(n):=\mathrm{St}(n)\cap G$ for any vertex $v$ and any level $n\ge 1$, respectively. The quotients $G_n:=G/\mathrm{St}_G(n)$ are called the \textit{congruence quotients} of $G$ and their elements the $n$-\textit{patterns} of $G$.

Let $G$ be a subgroup of $\mathrm{Aut}~T$. We say that $G$ is \textit{self-similar} if for any $g\in G$ and any vertex $v\in T$ we have $g|_{v}\in G$. We shall say that $G$ is \textit{fractal} if $G$ is self-similar and $\varphi_v(\mathrm{st}_G(v))=G$ for every $v\in T$. A stronger version of fractality is that of \textit{super strongly fractal} groups, where $\varphi_v(\mathrm{St}_G(n))=G$ for every vertex $v$ at the $n$th level of $T$ for every level $n\ge 1$. We shall not assume level-transitivity neither for fractal nor for super strongly fractal groups.

\section{Cyclic and hypercyclic automorphisms}
\label{section: example}

In this section we motivate the definition of cyclic and hypercyclic automorphisms via an example. We consider an elementary $p$-abelian self-similar profinite group $A$. We shall use the fact that elementary $p$-abelian groups may be regarded as $\mathbb{F}_p$-vector spaces to identify $A$ with the $\mathbb{F}_p$-vector space $V$ consisting of infinite sequences over $\mathbb{F}_p$. Then we show that self-similar closed subgroups of $A$ correspond precisely to the $\mathbb{F}_p$-vector subspaces of $V$ invariant under the left-shift operator. Therefore we study the cyclic and hypercyclic vectors of $V$ for the left-shift operator, which motivates the analogous concepts of cyclic and hypercyclic automorphisms in self-similar profinite groups. Furthermore, we study some discrete subspaces of $V$ and motivate the discrete analogue of cyclic automorphisms. Lastly, we give sufficient conditions for discrete automata groups to admit such cyclic automorphisms and apply this criterion to some of the main examples in the literature.

\subsection{The group $A$ and the space $V$}

Let us fix $\sigma:=(1\,\dotsb \, p)\in \mathrm{Sym}(p)$. We define the elementary $p$-abelian pro-$p$ group $A$ acting on the $p$-adic tree $T$  as 
$$A:=\overline{\langle g_n~:~ n\ge 0\rangle}\le \mathrm{Aut}~T,$$
where, for each $n\ge 0$, the automorphism $g_n$ is given by
    $$g_n|^1_v:=\begin{cases}
        \sigma,&\text{ if }v\text{ is at level }k= n;\\
        1,&\text{ if }v\text{ is at level }k\ne n.
    \end{cases}$$

Any element $g\in A$ admits a unique factorization of the form
    $$g=\prod_{n\ge 0}g_n^{e_n},$$
    where $0\le e_n\le p-1$. Note that the group $A$ is simply the profinite group $G_\mathcal{S}\le W_p$ defined in \cite[Section 4]{JorgeSpectra} given by the defining sequence $\mathcal{S}=\{S_n\}_{n\ge 0}$, where $S_n:=D_{p^n}(\sigma)$.

Furthermore, since $A$ is elementary $p$-abelian, it admits an $\mathbb{F}_p$-vector space structure. We define the $\mathbb{F}_p$-vector space $V$ consisting of infinite sequences over $\mathbb{F}_p$ with pointwise addition and scalar multiplication. Note that the vectors in $V$ may be simply regarded as functions $\mathbb{N}\cup\{0\}\to \mathbb{F}_p$ and $V$ is a compact topological space with respect to the product topology. 

There is an $\mathbb{F}_p$-linear homeomorphism $f:A\to V$ given by
    $$f(g)=f\big(\prod_{n\ge 0}g_n^{e_n}\big):=(e_n)_{n\ge 0}.$$

Let us define the \textit{left-shift} operator $\tau:V\to V$ via
$$\tau(e_n)_{n\ge 0}:=(e_{n+1})_{n\ge 0}.$$

We note that considering sections of an element in $A$ at the first level of $T$ coincides with the action of the left-shift operator on $V$ via the $\mathbb{F}_p$-homeomorphism~$f$, as shown in the proof of the following lemma:

\begin{lemma}
\label{lemma: self-similar invariant for A}
    Let $H\le A$ be a (closed) subgroup. The (closed) subgroup $H$ is self-similar if and only if $f(H)$ is a $\tau$-invariant (closed) subspace.
\end{lemma}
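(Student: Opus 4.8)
The plan is to reduce the whole statement to a single computational identity: for every $g = \prod_{n\ge 0} g_n^{e_n} \in A$ and every vertex $v \in T$ of length $k$, the section satisfies $f(g|_v) = \tau^{k}(f(g))$. In other words, taking the section at a level-$k$ vertex corresponds under $f$ exactly to applying $\tau^{k}$. Since $f$ is an $\mathbb{F}_p$-linear homeomorphism, once this is established both the ``subspace versus subgroup'' and the ``closed'' aspects transfer automatically, and the equivalence ``self-similar $\iff$ $\tau$-invariant'' is immediate.

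First I would establish the identity at level one. Write $g = g_0^{e_0}\,h$ with $h := \prod_{n\ge 1} g_n^{e_n}$; since each $g_n$ with $n \ge 1$ has trivial label at the root, $h \in \mathrm{St}(1)$. Fix a vertex $v$ at level $1$. Using the section formula, $(g_0^{e_0}h)|_v = (g_0^{e_0}|_v)(h|_{v^{g_0^{e_0}}})$; the first factor is trivial because $g_0^{e_0}$ is a rooted automorphism, and writing $w := v^{g_0^{e_0}}$, again a level-$1$ vertex, the continuity of the homomorphism $\varphi_w$ on $\mathrm{st}(w) \supseteq \mathrm{St}(1)$ gives $h|_w = \prod_{n\ge 1}(g_n|_w)^{e_n}$. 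The heart of the computation is the observation that $g_n|_w = g_{n-1}$ for every $n \ge 1$: indeed $g_n$ has label $\sigma$ exactly at level $n$ and is trivial elsewhere, so its section at a level-$1$ vertex has label $\sigma$ exactly at level $n-1$, which is precisely $g_{n-1}$. Hence $g|_v = \prod_{n\ge 1} g_{n-1}^{e_n} = \prod_{m\ge 0} g_m^{e_{m+1}}$, independently of $v$, so that $f(g|_v) = (e_{m+1})_{m\ge 0} = \tau(f(g))$. Iterating this identity along the path from the root to an arbitrary vertex $v$, together with the fact that sections compose, yields $f(g|_v) = \tau^{|v|}(f(g))$.

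With this in hand both implications are short. If $H$ is self-similar, then $f(H)$ is a subspace because $f$ is linear and bijective, and for any $w = f(h) \in f(H)$ and any level-$1$ vertex $x$ we get $\tau(w) = f(h|_x) \in f(H)$ since $h|_x \in H$; thus $f(H)$ is $\tau$-invariant. Conversely, if $f(H)$ is a $\tau$-invariant subspace, then for every $h \in H$ and every vertex $v$ we have $f(h|_v) = \tau^{|v|}(f(h)) \in f(H)$, hence $h|_v \in f^{-1}(f(H)) = H$, i.e. $H$ is self-similar. Finally, since $f$ is a homeomorphism, $H$ is closed precisely when $f(H)$ is, which takes care of the parenthetical statements.

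I expect the only delicate point to be the level-one computation, specifically the bookkeeping around the non-trivial action of $g_0$ on the first level (the appearance of $w = v^{g_0^{e_0}}$ in the section formula) and the use of continuity of $\varphi_w$ to pass the homomorphism through the infinite product defining $h$. Neither is serious: once the relation $g_n|_w = g_{n-1}$ is identified, everything else is formal.
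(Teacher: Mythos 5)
Your proof is correct and follows essentially the same route as the paper: both rest on the commutation identity $f(g|_v)=\tau(f(g))$ for first-level vertices $v$, with the closed case handled by $f$ being a homeomorphism. The only difference is presentational — you verify the key section computation (via the decomposition $g=g_0^{e_0}h$ and the relation $g_n|_w=g_{n-1}$) and iterate it to all levels, whereas the paper states the identity $g|_v=\prod_{n\ge 0}g_n^{e_{n+1}}$ without proof and instead invokes the reduction of self-similarity to first-level sections.
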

\begin{proof}
    Note that a group $G$ is self-similar if and only if for every $g\in G$ and every~$v$ at the first level of the tree $g|_v\in G$. Now note that for $g=\prod_{n\ge 0}g_n^{e_n}\in A$ and for any vertex $v$ at the first level of $T$ we have
    $$g|_v=\prod_{n\ge 0}g_n^{e_{n+1}}.$$
    Therefore 
    $$f(g|_v)=(e_{n+1})_{n\ge 0}=\tau((e_n)_{n\ge 0})=\tau(f(g)),$$
    i.e. the following diagram commutes
$$
    \begin{tikzcd}
    A \arrow{r}{f} \arrow[swap]{d}{\cdot |_v} & V \arrow{d}{\tau} \\
    A \arrow{r}{f} & V
\end{tikzcd}
$$
and the equivalence in the statement is now apparent. The same holds replacing ``subgroup$"$ with ``closed subgroup$"$ as $f$ is a homeomorphism.
\end{proof}

\subsection{Cyclicity and hypercyclicity in self-similar profinite groups}

A vector $v\in V$ is said to be \textit{cyclic} for the left-shift operator $\tau$ if the closed subspace generated by the $\tau$-orbit of $v$ is $V$. Furthermore if the $\tau$-orbit of $v$ itself is dense in $V$ then~$v$ is called \textit{hypercyclic}. In other words, cyclic (hypercyclic) vectors are those non-trivial vectors which are not contained in any proper $\tau$-invariant closed subspace (subset) of $V$. The commutative diagram above suggests that we may use the homeomorphism $f:A\to V$ to obtain analogous definitions for self-similar profinite groups.

Let $G\le \mathrm{Aut}~T$ be a self-similar group, where $T$ is a regular rooted tree. Given a subgroup $H\le G$, we define the \textit{self-similar closure} of $H$ in $G$, denoted by $H^{\mathrm{SS}}$, as the smallest self-similar subgroup of $G$ containing $H$. In other words
$$H^{\mathrm{SS}}:=\bigcap K,$$
where $K$ ranges over all self-similar subgroups of $G$ such that $H\le K\le G$. An explicit construction of the self-similar closure is given by the following straightforward lemma:

\begin{lemma}
    For $H\le G$ we have $H^{\mathrm{SS}}=\langle h|_v~:~ h\in H\text{ and }v\in T\rangle$.
\end{lemma}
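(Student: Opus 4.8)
The claim is that $H^{\mathrm{SS}} = \langle h|_v : h \in H, v \in T \rangle$. Let me denote the right-hand side by $N$. The plan is to prove two inclusions. For $N \subseteq H^{\mathrm{SS}}$: since $H^{\mathrm{SS}}$ is self-similar and contains $H$, for every $h \in H$ and every $v \in T$ we have $h \in H^{\mathrm{SS}}$ and hence $h|_v \in H^{\mathrm{SS}}$ by self-similarity; thus all the generators of $N$ lie in $H^{\mathrm{SS}}$, and since $H^{\mathrm{SS}}$ is a subgroup, $N \subseteq H^{\mathrm{SS}}$.

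For the reverse inclusion $H^{\mathrm{SS}} \subseteq N$, it suffices to show that $N$ is itself a self-similar subgroup of $G$ containing $H$, since $H^{\mathrm{SS}}$ is by definition the intersection of all such subgroups. It contains $H$ because taking $v = \emptyset$ gives $h|_\emptyset = h$ for every $h \in H$. It is contained in $G$ because each $h|_v \in G$ by self-similarity of $G$, and $G$ is a subgroup. The only real content is checking self-similarity, i.e. that $x|_w \in N$ for every $x \in N$ and every $w \in T$. The key step here is the section identity $(gh)|_w^n = (g|_w^n)(h|_{w^g}^n)$ recorded in the preliminaries, together with its inverse-version consequence $(g^{-1})|_w = (g|_{w^{g^{-1}}})^{-1}$, and the composition rule $(g|_v)|_u = g|_{vu}$ for sections. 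These show that the set $\{ x|_w : w \in T \}$ behaves well under products and inverses of the $x$'s.

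Concretely, I would argue as follows. A general element $x \in N$ is a word $x = \prod_{i=1}^k (h_i|_{v_i})^{\varepsilon_i}$ with $h_i \in H$, $v_i \in T$, $\varepsilon_i \in \{\pm 1\}$. Fix $w \in T$. Applying the product rule for sections repeatedly, $x|_w$ is a product of terms each of the form $\big((h_i|_{v_i})^{\varepsilon_i}\big)|_{w_i}$ for suitable vertices $w_i \in T$ depending on $w$ and the preceding factors. Using $(h_i|_{v_i})|_{w_i} = h_i|_{v_i w_i}$ and, in the case $\varepsilon_i = -1$, first rewriting $(h_i|_{v_i})^{-1} = (h_i^{-1})|_{v_i'}$ for an appropriate $v_i'$ (which is legitimate since $(h_i^{-1}) \in H$ as $H$ is a subgroup), we see each factor of $x|_w$ is again of the form $h|_{v}$ or $(h|_v)^{-1}$ with $h \in H$, $v \in T$. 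Hence $x|_w \in N$, so $N$ is self-similar.

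The main obstacle is purely bookkeeping: tracking how the section vertex $w$ gets permuted and transported through a word of length $k$ when one iterates the twisted product formula $(gh)|_w = (g|_w)(h|_{w^g})$. Nothing deep happens, but one must be careful that the vertices $w_i$ appearing are genuine vertices of $T$ (they are, being of the form $w^{g}$ or initial segments thereof) and that the inverse factors are handled by passing to $h_i^{-1} \in H$ before taking the section. Since the lemma is flagged as "straightforward," I would present the argument compactly: reduce to checking that the generating set $S = \{h|_v : h \in H, v\in T\}$ satisfies $s|_w \in \langle S\rangle$ and $(s^{-1})|_w \in \langle S \rangle$ for all $s \in S$, $w \in T$, which follows immediately from $(h|_v)|_w = h|_{vw}$ and $(h|_v)^{-1}|_w = (h^{-1}|_{v^{h^{-1}}})|_{w'} $-type identities, and then invoke that the set of group elements all of whose sections lie in a fixed subgroup is itself a subgroup.
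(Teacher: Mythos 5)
Your argument is correct and is exactly the straightforward verification the paper leaves implicit (the lemma is stated without proof): one inclusion because $H^{\mathrm{SS}}$ is self-similar and contains $H$, the other by checking that $\langle h|_v : h\in H,\ v\in T\rangle$ contains $H$ and is self-similar via the identities $(h|_v)|_w=h|_{vw}$, $(gh)|_w=(g|_w)(h|_{w^g})$ and $(h|_v)^{-1}=(h^{-1})|_{v^h}$. Nothing is missing; your observation that the generating set is already closed under inverses even lets you skip the $\varepsilon_i=-1$ bookkeeping.
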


\begin{definition}[Cyclicity and hypercyclicity in self-similar profinite groups]
\label{def: cyclic profinite}
    Let $G\le \mathrm{Aut}~T$ be a self-similar closed subgroup. We say $g\in G$ is \textit{cyclic} in~$G$ if $\overline{\langle g\rangle^{\mathrm{SS}}}=G$. The automorphism $g$ is called \textit{hypercyclic} if $\overline{\{g|_v~:~ v\in T\}}=G$. 
\end{definition}

In other words: an automorphism $g\in G$ is cyclic if it is not contained in any non-trivial proper self-similar closed subgroup of $G$ and hypercyclic if it is not contained in a non-trivial proper self-similar closed subset of $G$.

Cyclic and hypercyclic automorphisms can be used to deduce properties of the subgroup structure of a self-similar closed subgroup:

\begin{lemma}
\label{lemma: cyclic vectors in profinite groups}
    A self-similar closed subgroup $G\le \mathrm{Aut}~T$ contains a non-trivial proper self-similar closed subgroup (subset) if and only if $G$ contains a non-trivial non-cyclic (non-hypercyclic) automorphism.
\end{lemma}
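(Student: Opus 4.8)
The plan is to prove both equivalences directly from the definitions, exploiting that the self-similar closure of a cyclic subgroup and the set of sections are the natural "orbit-type" objects attached to a single automorphism. I will treat the subgroup statement and the subset statement in parallel, since the arguments are formally the same with "$\langle g\rangle^{\mathrm{SS}}$" replaced by "$\{g|_v : v\in T\}$" and "self-similar closed subgroup" replaced by "self-similar closed subset".

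First I would prove the "if" direction. Suppose $G$ contains a non-trivial non-cyclic automorphism $g$, i.e. $g\ne 1$ but $\overline{\langle g\rangle^{\mathrm{SS}}}\ne G$. Then $H:=\overline{\langle g\rangle^{\mathrm{SS}}}$ is by construction a self-similar subgroup of $G$ (the previous lemma gives $\langle g\rangle^{\mathrm{SS}}=\langle g|_v : v\in T\rangle$, and taking closures preserves self-similarity since sections depend continuously on the automorphism and $G$ is closed), it is closed, it is proper by assumption, and it is non-trivial since $g\in H$ and $g\ne 1$. Hence $G$ contains a non-trivial proper self-similar closed subgroup. For the subset version, if $g\ne 1$ is non-hypercyclic then $S:=\overline{\{g|_v : v\in T\}}$ is closed, is non-trivial (it contains $g=g|_\emptyset\ne 1$), is proper by assumption, and is self-similar because a section of a section is a section, so $\{g|_v\}$ is closed under taking sections and this passes to the closure.

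For the "only if" direction, suppose $G$ contains a non-trivial proper self-similar closed subgroup $H$. Pick any $g\in H\setminus\{1\}$; since $H$ is self-similar and closed, $\overline{\langle g\rangle^{\mathrm{SS}}}\le H\lneq G$, so $g$ is non-cyclic, and $g\ne 1$, giving the desired automorphism. The subset case is the only place that needs a small extra thought: given a non-trivial proper self-similar closed \emph{subset} $S\subseteq G$, I want a non-trivial non-hypercyclic automorphism. Since $S$ is self-similar it is closed under sections, so for any $g\in S$ we have $\{g|_v : v\in T\}\subseteq S$, hence $\overline{\{g|_v : v\in T\}}\subseteq S\lneq G$, i.e. every element of $S$ is non-hypercyclic. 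It remains to produce such a $g$ that is non-trivial; this is immediate unless $S=\{1\}$, which is excluded since $S$ is non-trivial.

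I do not expect a serious obstacle here; the statement is essentially a reformulation of the definitions once one invokes the explicit description of the self-similar closure and the (routine) fact that the closure of a self-similar set is self-similar in a closed ambient group. The only point requiring care is making sure "non-trivial" is handled symmetrically on both sides of each equivalence — in particular that a non-trivial self-similar closed subset contains a non-identity element, and conversely that the orbit/closure objects attached to a non-identity automorphism are themselves non-trivial — which is why I single out $g\ne 1$ explicitly at each step.
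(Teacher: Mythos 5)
Your proposal is correct and follows essentially the same argument as the paper: pass between a non-cyclic (non-hypercyclic) element $g\ne 1$ and the non-trivial proper self-similar closed subgroup (subset) $\overline{\langle g\rangle^{\mathrm{SS}}}$ (resp. $\overline{\{g|_v : v\in T\}}$), and conversely observe that any $1\ne g$ in such a subgroup (subset) has its closure-of-orbit object trapped inside it. Your extra verification that taking closures preserves self-similarity (via continuity of the section maps) is a detail the paper leaves implicit, but it is the same route.
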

\begin{proof}
    If $G$ contains a non-trivial proper self-similar closed subgroup $H$ then for any $1\ne h\in H$ we have $1\ne \overline{\langle h\rangle^{\mathrm{SS}}}\le H<G$, so $h$ is a non-trivial non-cyclic automorphism in $G$. For the converse, if $1\ne h\in G$ is non-cyclic by definition $\overline{\langle h\rangle^{\mathrm{SS}}}$ yields a non-trivial proper self-similar closed subgroup of $G$. For self-similar closed subsets the proof is analogous.
\end{proof}

A self-similar closed subgroup is defined to be \textit{cyclically (hypercyclically) generated as a profinite group} if it contains a cyclic (hypercyclic) automorphism in the sense of \cref{def: cyclic profinite}. \cref{Theorem: super strongly fractal cyclic} shows that actually every super strongly fractal profinite group is hypercyclically generated, but it does not yield explicit generators as its proof is probabilistic. Let us show an explicit example.

Let $H\le \mathrm{Sym}(m)$ be a transitive subgroup. We define the \textit{iterated wreath product} $W_H\le \mathrm{Aut}~T$ as 
$$W_H:=\{g\in \mathrm{Aut}~T~:~ g|_v^1\in H\text{ for every }v\in T\}.$$
The set $S:=\bigcup_{n\ge 0} P_n$, where each $P_n$ is the set of $n$-patterns of $W_H$, is countable, so let us write $\{s_i\}_{i\ge 1}$ for the countable set of finitary automorphisms corresponding to each pattern in $S$. Let us consider the automorphism
 $$g:=\prod_{i\ge 1} s_i*2\overset{i}{\dotsb} 21\in W_H,$$
where $g*v$ stands for the unique automorphism in $\mathrm{St}_G(|v|)$ such that $(g*v)|_v=g$ and $(g*v)|_w=1$ for every other vertex $w$ at the same level as $v$. Then by construction $g|_{2\overset{i}{\dotsb} 21}=s_i$ for every $i\ge 1$, as for any $1\le i<j$ the vertices $2\overset{i}{\dotsb}21$ and $2\overset{j}{\dotsb}21$ do not lie in a common geodesic from the root. Therefore
\begin{align*}
        \overline{\{g|_v~:~ v\in T\}}=\overline{\{s_i~:~i\ge 1\}}=&W_H
\end{align*}
yielding that $g$ is hypercyclic.

\subsection{Cyclicity in self-similar discrete groups}

\begin{definition}[Cyclicity in discrete groups]
\label{def: cyclicity discrete}
    Let $G\le \mathrm{Aut}~T$ be a self-similar discrete subgroup. We say $g\in G$ is \textit{cyclic} in~$G$ if $\langle g\rangle^{\mathrm{SS}}=G$.
\end{definition}

 In other words, $g\in G$ is cyclic if there is no proper self-similar subgroup of $G$ containing $g$.
The following is proved exactly as \cref{lemma: cyclic vectors in profinite groups}:

\begin{lemma}
    A self-similar group $G\le \mathrm{Aut}~T$ contains a non-trivial proper self-similar subgroup if and only if $G$ contains a non-trivial non-cyclic automorphism.
\end{lemma}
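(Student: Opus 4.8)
The plan is to mirror the argument of \cref{lemma: cyclic vectors in profinite groups} verbatim, simply dropping the topological closures since we are now in the discrete setting. Both implications will be immediate consequences of the defining property of the self-similar closure recorded earlier, namely that $\langle h\rangle^{\mathrm{SS}}$ is the smallest self-similar subgroup of $G$ containing $h$ (equivalently, $\langle h\rangle^{\mathrm{SS}}=\langle h|_v~:~v\in T\rangle$).

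For the forward direction I would assume $G$ contains a non-trivial proper self-similar subgroup $H$ and pick any $h\in H$ with $h\ne 1$. Since $H$ is a self-similar subgroup of $G$ containing $h$, minimality of the self-similar closure yields $\langle h\rangle^{\mathrm{SS}}\le H$, and since $h\ne 1$ we have $\langle h\rangle^{\mathrm{SS}}\ne 1$; hence $1\ne\langle h\rangle^{\mathrm{SS}}\le H<G$, so by \cref{def: cyclicity discrete} the element $h$ is a non-trivial non-cyclic automorphism of $G$. For the converse, assume $1\ne h\in G$ is non-cyclic. By \cref{def: cyclicity discrete} this means exactly that $\langle h\rangle^{\mathrm{SS}}\ne G$; this subgroup is self-similar by construction and non-trivial because it contains $h\ne 1$, so it is the desired non-trivial proper self-similar subgroup.

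There is essentially no obstacle here: the entire content lies in the existence and minimality of the self-similar closure, both of which were already established, so the statement follows purely by unwinding \cref{def: cyclicity discrete}.
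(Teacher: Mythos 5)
Your proof is correct and is essentially the paper's own argument: the paper proves the discrete lemma by invoking the same reasoning as \cref{lemma: cyclic vectors in profinite groups}, which is exactly what you reproduce with the closures removed, relying only on minimality of the self-similar closure. (The parenthetical identity $\langle h\rangle^{\mathrm{SS}}=\langle h|_v : v\in T\rangle$ is not needed for the argument, so its small implicit verification does not affect correctness.)
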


We say that a self-similar discrete group is \textit{cyclically generated as a discrete group} if it contains a cyclic automorphism in the sense of \cref{def: cyclicity discrete}.

Let us return to the example at the beginning of the section. Let $Q\le V$ be the subset consisting of eventually periodic sequences, where an \textit{eventually periodic sequence} is a sequence $(e_n)_{n\ge 0}$ where there exist $k,N\ge 0$ such that $e_{k+j+nN}=e_{k+j}$ for every $n\ge 0$ and $0\le j<N$.

\begin{proposition}
\label{proposition: rational subgroup}
    The set $Q$ is a $\tau$-invariant infinite-dimensional dense subspace of $V$ not containing cyclic vectors for the left-shift operator $\tau$. Furthermore, every vector of $Q$ is contained in a finite $\tau$-invariant subspace.
\end{proposition}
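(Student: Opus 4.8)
The plan is to verify each assertion about $Q$ in turn, most of them by direct manipulation of eventually periodic sequences. First, $Q$ is an $\mathbb{F}_p$-subspace: if $(e_n)$ is eventually periodic with pre-period $k$ and period $N$, and $(e_n')$ is eventually periodic with pre-period $k'$ and period $N'$, then $(e_n+e_n')$ and $(\lambda e_n)$ are eventually periodic with pre-period $\max\{k,k'\}$ and period $\mathrm{lcm}(N,N')$ (respectively pre-period $k$ and period $N$), so $Q$ is closed under the vector space operations and contains $0$. Next, $\tau$-invariance is immediate from the definition: shifting an eventually periodic sequence left by one decreases (or leaves unchanged) the pre-period and preserves the period, so $\tau(Q)\subseteq Q$, and in fact $\tau(Q)=Q$ since every eventually periodic sequence is the shift of one obtained by prepending an arbitrary symbol. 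That $Q$ is infinite-dimensional follows because, e.g., the sequences $e_i^{(j)}$ which are $1$ in position $j$ and $0$ elsewhere (ultimately constant, hence in $Q$) are linearly independent for all $j\ge 0$; density in $V$ under the product topology holds because any basic open set of $V$ fixes only finitely many coordinates, and we may complete a prescribed finite prefix to an eventually constant — hence eventually periodic — sequence lying in that open set.

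The substantive points are the last two: that every vector of $Q$ lies in a finite-dimensional $\tau$-invariant subspace, and hence that $Q$ contains no cyclic vectors. For a fixed $v=(e_n)_{n\ge 0}\in Q$ with pre-period $k$ and period $N$, I would consider the $\tau$-orbit $\{v,\tau v,\tau^2 v,\dots\}$. Each $\tau^j v$ is again eventually periodic with period dividing $N$ and pre-period at most $k$, and such a sequence is determined by its first $k+N$ coordinates; moreover once $j\ge k$ the sequence $\tau^j v$ is purely periodic with period dividing $N$, and there are at most $p^N$ such sequences, while $\tau^0 v,\dots,\tau^{k-1} v$ contribute at most $k$ more. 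Hence the $\tau$-orbit of $v$ is finite, so the subspace $U$ it spans is finite-dimensional; $U$ is $\tau$-invariant because $\tau$ permutes a generating set of $U$ into $U$. Since $U$ is finite-dimensional it is automatically closed, and it is proper in $V$ (as $\dim_{\mathbb{F}_p} V=\infty$), so $v\notin U = \overline{\langle v\rangle^{\tau}}$ cannot generate $V$ as a closed $\tau$-invariant subspace, i.e. $v$ is not cyclic. As this holds for every $v\in Q$, the subset $Q$ contains no cyclic vectors.

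I do not expect any real obstacle here; the only point requiring a moment's care is the bookkeeping on pre-periods and periods showing the $\tau$-orbit of a single eventually periodic sequence is finite (equivalently, that the shift acts on the finite set of "tails" of period dividing $N$). Everything else is routine verification of the subspace, invariance, density and dimension claims. One could phrase the finiteness of the orbit slightly more slickly by noting that $\tau^k v$ is purely periodic of period $N$, that the purely periodic sequences of period dividing $N$ form a $\tau$-invariant subspace of dimension $N$, and that adding back the finitely many sequences $v,\tau v,\dots,\tau^{k-1}v$ keeps the span finite-dimensional and $\tau$-invariant; I would present it this way to keep the argument clean.
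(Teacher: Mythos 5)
Your proposal is correct and follows essentially the same route as the paper: show $Q$ is a dense $\tau$-invariant subspace, observe that the $\tau$-orbit of an eventually periodic sequence is finite, and conclude that the (finite, hence closed and proper) $\tau$-invariant subspace it spans rules out cyclicity. Your version just spells out the pre-period/period bookkeeping and the density and dimension checks that the paper leaves as immediate.
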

\begin{proof}
    Clearly $Q$ is dense in $V$ as every sequence in $V$ may be approximated by eventually periodic sequences. A linear combination of eventually periodic sequences is again an eventually periodic sequence. Similarly the left-shift of an eventually periodic sequence is again an eventually periodic sequence. Thus $Q$ is a $\tau$-invariant dense subspace of $V$. Finally, the $\tau$-orbit of an eventually periodic sequence is finite, so the $\mathbb{F}_p$-subspace which this orbit generates is finite-dimensional over~$\mathbb{F}_p$ and thus finite. 
\end{proof}

Now \cref{proposition: rational subgroup} yields the existence of an infinite self-similar discrete group not admitting cyclic automorphisms in the sense of \cref{def: cyclicity discrete}:

\begin{corollary}
\label{corollary: existence of non-cyclically generated discrete groups}
    There exists an infinite self-similar discrete subgroup $G\le A\le \mathrm{Aut}~T$ such that every $g\in G$ is contained in a finite self-similar subgroup.
\end{corollary}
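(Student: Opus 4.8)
The plan is to pull the vector-space statement of \cref{proposition: rational subgroup} back along the $\mathbb{F}_p$-linear homeomorphism $f\colon A\to V$. Concretely, I would set $G:=f^{-1}(Q)$, where $Q\le V$ is the subspace of eventually periodic sequences, and verify that this $G$ has all the required features. Note that $G$ is not closed (eventually periodic sequences do not form a closed subspace of $V$), so we genuinely stay in the discrete setting of \cref{def: cyclicity discrete}.

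First, since $Q$ is an $\mathbb{F}_p$-subspace of $V$ and $f$ is an $\mathbb{F}_p$-linear bijection, $G$ is an $\mathbb{F}_p$-subspace of $A$, in particular a subgroup of $A\le\mathrm{Aut}~T$; and as $Q$ is infinite-dimensional over $\mathbb{F}_p$ it is infinite as a set, hence so is $G$. Second, $Q$ is $\tau$-invariant by \cref{proposition: rational subgroup}, so \cref{lemma: self-similar invariant for A}, applied to plain (not necessarily closed) subgroups, shows that $G$ is self-similar.

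It then remains to exhibit, for each $g\in G$, a finite self-similar subgroup of $G$ containing $g$. Given $g$, the sequence $f(g)\in Q$ is contained, again by \cref{proposition: rational subgroup}, in some finite $\tau$-invariant subspace $W\le Q$. I would take the subgroup $f^{-1}(W)$: it lies inside $G=f^{-1}(Q)$ because $W\le Q$, it contains $g$, it is finite since $\# f^{-1}(W)=\# W=p^{\dim_{\mathbb{F}_p}W}<\infty$, and it is self-similar by \cref{lemma: self-similar invariant for A} because $W$ is $\tau$-invariant. This completes the argument.

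Since everything reduces to \cref{proposition: rational subgroup} and \cref{lemma: self-similar invariant for A}, there is no genuine obstacle here; the only point requiring a little care is that the finite self-similar overgroup of $g$ should be taken inside $G$ rather than merely inside $A$, which is automatic once one observes that the witnessing $\tau$-invariant subspace $W$ may be chosen within $Q$.
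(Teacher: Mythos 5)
Your argument is exactly the paper's proof: the paper also takes $G:=f^{-1}(Q)$ and cites \cref{lemma: self-similar invariant for A} and \cref{proposition: rational subgroup}, and you have merely spelled out the routine verifications (infiniteness, self-similarity, and pulling back the finite $\tau$-invariant subspace containing $f(g)$, which indeed can be taken inside $Q$ as the span of the finite $\tau$-orbit). The proposal is correct and matches the paper's approach.
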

\begin{proof}
    Just consider $G:=f^{-1}(Q)$ by \cref{lemma: self-similar invariant for A} and \cref{proposition: rational subgroup}.
\end{proof}

Note however that the closure $\overline{G}=A\le \mathrm{Aut}~T$ admits cyclic and hypercyclic automorphisms in the sense of \cref{def: cyclic profinite} by \cref{Theorem: super strongly fractal cyclic}, as $\overline{G}=A$ is super strongly fractal.

Let us see some examples of self-similar discrete groups acting faithfully on the binary rooted tree and admitting cyclic automorphisms.

\subsubsection{First Grigorchuk group} The first Grigorchuk group $\mathcal{G}$ is generated by the rooted automorphism $a$ corresponding to the cycle $\sigma:=(1\,2)\in \mathrm{Sym}(2)$ and the automorphisms $b,c,d$ defined recursively as $\psi(b)=(a,c)$, $\psi(c)=(a,d)$ and $\psi(d)=(1,b)$; see \cite{GrigorchukExample}. Then $\mathcal{G}$ is cyclically generated by any of the $b,c$ and $d$.

\subsubsection{Basilica group} The Basilica group $\mathcal{B}$ is generated by the automorphisms $\psi(a)=(1,b)\sigma$ and $\psi(b)=(1,a)$; see \cite{Basilica}. Then~$\mathcal{B}$ is cyclically generated by both $a$ and $b$. We shall see that every non-trivial automorphism of $\mathcal{B}$ is cyclic.

\subsubsection{Brunner-Sidki-Vieira group} The Brunner-Sidki-Vieira group $H$ is the group generated by $\psi(a)=(1,a)\sigma$ and $\psi(b)=(1,b^{-1})\sigma$; see \cite{BSV1}. Then the generators $a$ and $b$ are not cyclic as each of them generates a distinct proper self-similar subgroup of $H$. However, the element $\psi(ab)=(b^{-1},a)$ is cyclic so $H$ is cyclically generated.\\

The three examples above are instances of automata groups. We devote the remainder of the section to studying cyclicity in automata groups.

\subsection{Automata groups}

We recall that an \textit{invertible finite-state automaton} is a quadruple $(S,X,t,o)$ where:
\begin{enumerate}[\normalfont(i)]
    \item the finite set $S$ is the \textit{set of states};
    \item the finite set $X$ is the \textit{alphabet};
    \item the map $t:X\times S\to S$ is the \textit{transition function};
    \item the map $o:X\times S\to X$ is the \textit{output function} satisfying $o(-,s)\in \mathrm{Sym}(X)$ for all $s\in S$.
\end{enumerate}

Finite-state automata are usually represented as labelled directed graphs, where the set of vertices corresponds to the set of states and edges are given by the transition function and labelled according to the output function; see \cref{fig:enter-label} below. Two states are said to be \textit{path-connected} if there exists a directed path in the automaton from one to the other. A state $s\in S$ is said to be \textit{fully connected} if there is a path from it to any other state in $S$.

If the alphabet $X$ has $m$ symbols we may assume $X=\{1,\dotsc,m\}$. Then the transition and the output functions may be extended to the $m$-adic tree $T$ inductively as
$$t(xv,s)=t(v,t(x,s))\quad\text{and}\quad o(xv,s)=o(x,s)o(v,t(x,s))$$
for $x\in X,s\in S$ and $v\in T$ and both $t(\emptyset,s)=s$ and
$o(\emptyset,s)=\emptyset$. Therefore, for every state $s\in S$ the function $o_s:T\to T$ given by $o_s(v)=o(v,s)$ for $v\in T$ yields an automorphism of $T$, i.e. $o_s\in \mathrm{Aut}~T$. A state $s\in S$ is said to be \textit{finitary} if the corresponding automorphism $o_s$ is finitary.

\begin{figure}[H]
    \centering
    \begin{tikzpicture} [draw=teal!70!black,
    node distance = 4cm, 
    on grid, 
    auto,
    every initial by arrow/.style = {thick}]

\node (c) [state, red] {$c$};  
\node (b) [state, red] at (-2,2) {$b$};
\node (d) [state, red] at (2,2) {$d$};
\node (a) [state] at (-2,-2) {$a$};
\node (1) [state, accepting] at (2,-2) {$1$};

\node (aa) [state, red] at (4,0) {$a$};
\node (bb) [state, red] at (8,0) {$b$};
\node (11) [state, accepting] at (6,-0) {$1$};

\path [-stealth, thick]
     (c) edge[red] node {$1|1$} (d)
     (d) edge[red] node[above] {$1|1$} (b)
     (b) edge[red] node {$1|1$} (c)
     (b) edge node[left] {$0|0$} (a)
     (c) edge node {$0|0$} (a)
     (d) edge node {$0|0$} (1)
     (a) edge node[below] {$0|1,~1|0$} (1)
     (1) edge[loop below]  node {1}()
     (aa) edge[red, bend left=90] node {$1|0$} (bb)
     (bb) edge[red, bend left=90] node {$1|1$} (aa)
    (aa) edge  node {$0|1$} (11)
     (bb) edge  node[above] {$0|0$} (11);
\end{tikzpicture}
    \caption{From left to right, the Grigorchuk and the Basilica automata with the fully connected non-finitary states in red.}
    \label{fig:enter-label}
\end{figure}
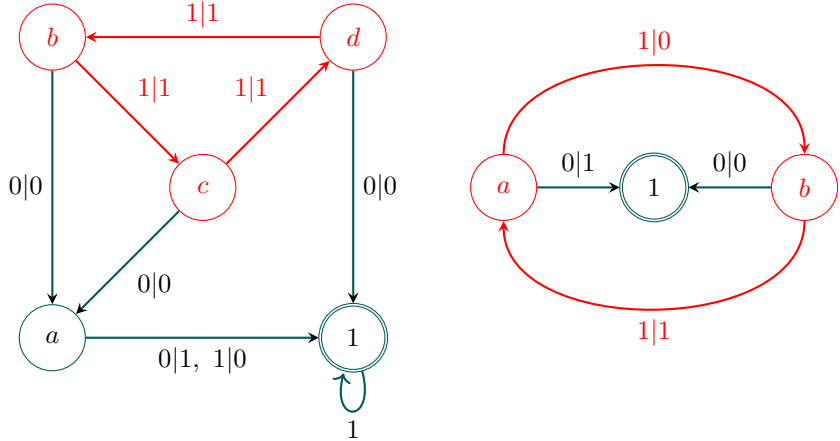

Given a finite-state automaton $(S,X,t,o)$ the subgroup $\langle o_s~:~ s\in S\rangle\le \mathrm{Aut}~T$ is the corresponding \textit{automata group}. We refer the reader to \cite{Automata} for further details on automata groups.

A first observation is that a fully-connected state $s\in S$ yields a cyclic automorphism $o_s\in \mathrm{Aut}~T$.

\begin{proposition}
\label{proposition: cyclic generation in automata groups}
    Let $G\le \mathrm{Aut}~T$ be an automata group given by an automaton $(S,X,t,o)$ containing a fully-connected state $s\in S$. Then $G$ can be cyclically generated by $o_s$.
\end{proposition}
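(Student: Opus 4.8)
The plan is to reduce everything to the explicit description of the self-similar closure recorded just before \cref{def: cyclic profinite}, namely $\langle o_s\rangle^{\mathrm{SS}}=\langle o_s|_v ~:~ v\in T\rangle$. Thus it suffices to show that the set of sections $\{o_s|_v~:~v\in T\}$ contains $o_{s'}$ for every state $s'\in S$, since by definition $G=\langle o_{s'}~:~s'\in S\rangle$.

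First I would establish the section formula $o_s|_v=o_{t(v,s)}$ for every $v\in T$. This is a straightforward induction on $|v|$ using the inductive extensions $o(xv,s)=o(x,s)\,o(v,t(x,s))$ and $t(xv,s)=t(v,t(x,s))$: comparing the defining property of a section, $(vu)^{o_s}=v^{o_s}\,u^{o_s|_v}$, with the identity $o(vu,s)=o(v,s)\,o(u,t(v,s))$, one gets $u^{o_s|_v}=o(u,t(v,s))=u^{o_{t(v,s)}}$ for all $u\in T$, whence $o_s|_v=o_{t(v,s)}$. The base case $v=\emptyset$ is consistent since $o_s|_\emptyset=o_{t(\emptyset,s)}=o_s$.

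Next I would invoke full connectedness of $s$: for each $s'\in S$ there is a directed path in the automaton from $s$ to $s'$, which by the way the edges of the automaton graph are defined from the transition function is exactly a word $v\in T$ with $t(v,s)=s'$. Hence $\{o_s|_v~:~v\in T\}=\{o_{t(v,s)}~:~v\in T\}=\{o_{s'}~:~s'\in S\}$, and therefore $\langle o_s\rangle^{\mathrm{SS}}=\langle o_{s'}~:~s'\in S\rangle=G$, which is precisely the statement that $o_s$ is cyclic in $G$ in the sense of \cref{def: cyclicity discrete}.

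I do not expect a genuine obstacle here; the only point requiring a little care is the section formula $o_s|_v=o_{t(v,s)}$, and the rest is bookkeeping with the definitions of the transition function, its extension to $T$, and of a path in the automaton graph. (One may also note in passing that the same argument shows $o_s$ is hypercyclic whenever $S$ happens to be the set of states of the minimal automaton realizing $G$, so that $\overline{\{o_s|_v\}}=\{o_{s'}:s'\in S\}$ is already a generating set, but for the stated proposition cyclic generation is all that is needed.)
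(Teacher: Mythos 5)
Your argument is correct and is essentially the paper's own proof: the paper simply asserts that full connectedness gives, for each state $s_i\in S$, a vertex $v_i\in T$ with $o_s|_{v_i}=o_{s_i}$ (which your section formula $o_s|_v=o_{t(v,s)}$ together with the path--word correspondence makes explicit) and then concludes $\langle o_s\rangle^{\mathrm{SS}}\ge\langle o_{s_i}:s_i\in S\rangle=G$. The only flaw is your parenthetical aside on hypercyclicity, which conflates ``the sections form a generating set'' with ``the orbit of sections is dense''; since that remark is not needed for the stated proposition, it should simply be dropped.
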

\begin{proof}
    By definition of fully connected, for each state $s_i\in S$ there exists a vertex $v_i\in T$ such that $o_s|_{v_i}=o_{s_i}$. Therefore $\langle o_s\rangle^{\mathrm{SS}}\ge \langle o_{s_i}\mid s_i\in S\rangle=G$ and $o_s$ is cyclic.    
\end{proof}

Let $G$ be a group generated by a finite set $S$. The \textit{length} of $g$ with respect to~$S$ is defined as the minimal length of a word representing $g$ with letters in $S\cup S^{-1}$, and we denote it by $|g|$. If $G\le \mathrm{Aut}~T$ is self-similar then for $g\in G$ and $n\ge 0$ we define $l_n(g)$ as the maximum of the lengths of the sections of $g$ at the $n$th level of~$T$. We say $G$ is \textit{contracting} if there exist constants $C,N\ge 1$ and $\lambda<1$ such that $l_n(g)\le \lambda |g|+C$ for every $n\ge N$ and every $g\in G$. In this case there is a finite set called the \textit{contracting nucleus of} $G$ such that for any $g\in G$ all the sections of~$g$ deep enough in the tree lie in this contracting nucleus.

Now we are in position to state and prove a sufficient condition for cyclicity in contracting automata groups:

\begin{theorem}
\label{Theorem: main result}
    Let $G\le \mathrm{Aut}~T$ be a contracting automata group generated by a set of states $S$ which satisfies the following properties:
    \begin{enumerate}[\normalfont(i)]
        \item the non-finitary states in $S$ are fully connected;
        \item the contracting nucleus of $G$ is contained in $\{o_s~:~s\in S\}$.
    \end{enumerate}
    Then every non-finitary $g\in G$ is cyclic and every finitely generated self-similar proper subgroup of $G$ is finite.
\end{theorem}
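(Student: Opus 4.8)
The plan is to localise an arbitrary non-finitary $g\in G$ to a non-finitary section sitting inside the contracting nucleus, and then feed that section into \cref{proposition: cyclic generation in automata groups}. Fix a non-finitary $g\in G$. Since $G$ is contracting, there is an integer $n_0\ge 1$, depending only on $|g|$, such that $g|_v$ lies in the contracting nucleus $\N$ of $G$ for every vertex $v$ with $|v|\ge n_0$. I would first check that at least one of the finitely many sections $g|_v$ with $|v|=n_0$ is non-finitary: otherwise each such $g|_v$ is finitary of some depth $d_v$, and since $g|_{vu}=(g|_v)|_u$ the automorphism $g$ would then be finitary of depth $n_0+\max_{|v|=n_0}d_v$, contradicting the choice of $g$. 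Fix such a vertex $v$; then $g|_v\in\N$ is non-finitary.

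By hypothesis (ii) we may write $g|_v=o_s$ for some state $s\in S$, and $o_s$ being non-finitary forces $s$ to be a non-finitary state; by hypothesis (i), $s$ is fully connected, so \cref{proposition: cyclic generation in automata groups} gives $\langle o_s\rangle^{\mathrm{SS}}=G$. On the other hand $g|_v=o_s$ lies in $\langle g\rangle^{\mathrm{SS}}$ by the explicit description of the self-similar closure, and $\langle g\rangle^{\mathrm{SS}}$ is a self-similar subgroup of $G$ containing $o_s$; hence $G=\langle o_s\rangle^{\mathrm{SS}}\le\langle g\rangle^{\mathrm{SS}}\le G$, so $g$ is cyclic. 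This proves the first assertion.

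For the second assertion, let $H\le G$ be a finitely generated self-similar proper subgroup. If $H$ contained a non-finitary element $h$, the first part would give $\langle h\rangle^{\mathrm{SS}}=G$; but $\langle h\rangle^{\mathrm{SS}}\le H$ because $H$ is a self-similar subgroup of $G$ containing $h$, whence $H=G$, contradicting properness. Therefore every element of $H$ is finitary. Choosing a finite generating set of $H$ and letting $d$ be the maximum of the depths of its members, $H$ is contained in the group of finitary automorphisms of depth at most $d$, which is isomorphic to the finite group $\mathrm{Aut}~T_d$; hence $H$ is finite. The only step here that is not purely formal is the extraction of a non-finitary section inside $\N$; everything else is a direct application of \cref{proposition: cyclic generation in automata groups} together with the facts that the self-similar closure contains all sections and that finitary automorphisms of bounded depth form a finite group. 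The main points to handle with care are that the ``deep enough'' level in the definition of contracting produces a single finite nucleus valid for all $g$, and that hypothesis (ii) is an inclusion of sets of automorphisms, so that $g|_v=o_s$ is an honest equality in $\mathrm{Aut}~T$, which is exactly what \cref{proposition: cyclic generation in automata groups} requires.
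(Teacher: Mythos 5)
Your proposal is correct and follows essentially the same route as the paper: locate a non-finitary section of $g$ inside the nucleus, identify it with a fully connected state via hypotheses (i)--(ii), apply \cref{proposition: cyclic generation in automata groups}, and deduce the second assertion by noting a proper self-similar subgroup can only contain finitary elements. You merely make explicit two points the paper leaves implicit, namely why some section at the contracting level must be non-finitary and why a finitely generated group of finitary automorphisms is finite, both of which you handle correctly.
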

\begin{proof}
    If every non-finitary automorphism is cyclic then every finitely generated self-similar proper subgroup is finitary and in particular finite. Thus the second assertion follows from the first one. Let $g\in G$ be a non-finitary automorphism and let us show that $g$ is cyclic. Since $g$ is non-finitary and $G$ is contracting with contracting nucleus contained in the set $\{o_s~:~s\in S\}$, we may find a vertex $v\in T$ such that $g|_{v}=o_s$ where $o_s$ is non-finitary. However, by assumption $s$ is fully connected and thus by \cref{proposition: cyclic generation in automata groups} the automorphism $o_s$ is cyclic. Then $\langle g\rangle^{\text{SS}}\ge \langle o_s\rangle^{\text{SS}}=G$ and $g$ is cyclic as well.
\end{proof}

\cref{Theorem: main result} applies to many well-known automata groups, such as the first Grigorchuk group and the Grigorchuk-Gupta-Sidki groups; see \cite[section 2.3]{GGScontracting}. A consequence of \cref{Theorem: main result} is that torsion-free self-similar automata groups satisfying the assumptions in \cref{Theorem: main result} do not have any non-trivial proper self-similar subgroup. In particular, this is the case for the Basilica group $\mathcal{B}$; see \cite{Basilica}.

\begin{Corollary}
    The Basilica group $\mathcal{B}$ does not contain any non-trivial proper self-similar subgroup.
\end{Corollary}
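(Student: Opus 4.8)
The plan is to derive the corollary from \cref{Theorem: main result}, after checking that $\mathcal{B}$ falls within its scope and after upgrading ``finitely generated self-similar proper subgroup is finite'' to ``no non-trivial proper self-similar subgroup''. First I would recall the two standard facts about the Basilica group from \cite{Basilica}: it is a contracting automata group, generated by the states $a,b$ of the automaton in \cref{fig:enter-label}, and it is torsion-free. From the automaton one reads off that the only non-finitary states are $a$ and $b$, and its contracting nucleus is $\mathcal{N}=\{1,a^{\pm1},b^{\pm1}\}$, whose non-finitary part is $\{a^{\pm1},b^{\pm1}\}$. The key local computation is that each of these four elements is cyclic: using $\psi(a)=(1,b)\sigma$, $\psi(b)=(1,a)$ and their inverses one gets $a|_{2}=b$, $b|_{2}=a$, $a^{-1}|_{1}=b^{-1}$, $b^{-1}|_{2}=a^{-1}$, so $\langle a\rangle^{\mathrm{SS}}\supseteq\langle a,b\rangle=\mathcal{B}$ and $\langle a^{-1}\rangle^{\mathrm{SS}}\supseteq\langle a^{-1},b^{-1}\rangle=\mathcal{B}$, and symmetrically for $b,b^{-1}$; for $a$ this is an instance of \cref{proposition: cyclic generation in automata groups} applied to the three-state subautomaton on $\{1,a,b\}$, in which $a$ is fully connected. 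With this in hand the argument of \cref{Theorem: main result} applies: given a non-finitary $g\in\mathcal{B}$, contraction provides a vertex $v$ with $g|_v\in\mathcal{N}$, and since $g$ is non-finitary we may take $g|_v$ non-trivial, hence $g|_v\in\{a^{\pm1},b^{\pm1}\}$; as $g|_v$ is cyclic, $\langle g\rangle^{\mathrm{SS}}\supseteq\langle g|_v\rangle^{\mathrm{SS}}=\mathcal{B}$, so $g$ is cyclic.

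It then remains to pass from non-finitary elements to all non-trivial elements and to all self-similar subgroups. Since $\mathcal{B}$ is torsion-free and every non-trivial finitary automorphism has finite order, the only finitary element of $\mathcal{B}$ is $1$; hence every $1\neq g\in\mathcal{B}$ is non-finitary and therefore cyclic by the previous step. Now let $H\le\mathcal{B}$ be self-similar with $H\neq 1$ and pick $1\neq h\in H$: since $H$ is self-similar it contains $\langle h\rangle^{\mathrm{SS}}$, which equals $\mathcal{B}$ because $h$ is cyclic, so $H=\mathcal{B}$ is not proper. (Alternatively one may invoke the discrete analogue of \cref{lemma: cyclic vectors in profinite groups}, noting that $\mathcal{B}$ has no non-trivial non-cyclic automorphism.) This is also why the finite-generation hypothesis appearing in the conclusion of \cref{Theorem: main result} is not needed here.

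The step I expect to be the genuine obstacle is the verification that $\mathcal{B}$ meets the hypotheses of \cref{Theorem: main result}: pinning down the contracting nucleus $\{1,a^{\pm1},b^{\pm1}\}$ (the only truly external input, from \cite{Basilica}) and establishing the cyclicity of its non-finitary members. The subtlety is that the nucleus is strictly larger than the state set $\{1,a,b\}$ of the automaton drawn in \cref{fig:enter-label}: it also contains $a^{-1},b^{-1}$, which sit in a separate strongly connected component of the symmetrised automaton, so no single non-finitary state reaches all the others, and the literal ``fully connected'' phrasing of \cref{Theorem: main result}(i) must be replaced by the weaker property actually used in its proof — that from each non-finitary nucleus state one reaches a generating set of $\mathcal{B}$, which $a\to b$ and $a^{-1}\to b^{-1}$ supply. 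Everything else (the section computations, the containment $\langle g\rangle^{\mathrm{SS}}\supseteq\langle g|_v\rangle^{\mathrm{SS}}$, and torsion-freeness as a citation) is routine.
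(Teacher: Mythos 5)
Your route is the same as the paper's (derive the corollary from \cref{Theorem: main result} plus torsion-freeness of $\mathcal{B}$, so that every non-trivial element is non-finitary and hence cyclic), and you correctly identified where the real work lies; but your resolution of that obstacle rests on a wrong identification of the contracting nucleus. With the paper's conventions ($\psi(a)=(1,b)\sigma$, $\psi(b)=(1,a)$, right action, left-to-right composition) one computes $\psi(a^{-1})=(b^{-1},1)\sigma$ and $\psi(b^{-1})=(1,a^{-1})$, and then for $c:=a^{-1}b$ one gets $\psi(c)=(c^{-1},1)\sigma$ and $\psi(c^{-1})=(1,c)\sigma$. Thus $c|_{12}=c$, so $c$ recurs as a section of itself at every even level and therefore $c,c^{-1}$ lie in the nucleus: the nucleus of $\mathcal{B}$ is the seven-element set $\{1,a^{\pm1},b^{\pm1},(a^{-1}b)^{\pm1}\}$, not $\{1,a^{\pm1},b^{\pm1}\}$. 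This breaks the key step of your argument: a non-finitary $g$ may have all its deep sections equal to $(a^{-1}b)^{\pm1}$, and from these states one cannot reach a generating set by taking sections, since the only sections of $c^{\pm1}$ are $1$, $c$ and $c^{-1}$; so your weakened ``reaches a generating set'' version of hypothesis (i) fails exactly for the nucleus elements you omitted.

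Moreover the gap does not look patchable: from $\psi(c)=(c^{-1},1)\sigma$ one gets $\psi(c^{2})=(c^{-1},c^{-1})$, and an easy induction shows every section of every power of $c$ is again a power of $c$. Hence $\langle a^{-1}b\rangle$ is a non-trivial proper (infinite cyclic) self-similar subgroup of $\mathcal{B}$ and $a^{-1}b$ is a non-finitary, non-cyclic element. This is in direct tension with the statement you are asked to prove, and it also shows that $\mathcal{B}$ cannot satisfy hypotheses (i) and (ii) of \cref{Theorem: main result} simultaneously for any generating automaton: by (ii) the state set must contain the seven-element nucleus, and then $(a^{-1}b)^{\pm1}$ are non-finitary states that are not fully connected. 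The paper itself only asserts, without verification, that $\mathcal{B}$ meets these hypotheses, so the difficulty you flagged is genuine, but it is resolved neither by your nucleus claim nor by your weakening of (i); before trusting either your write-up or the corollary as stated, the nucleus computation (and the wreath recursion convention for $\mathcal{B}$) needs to be re-examined. The remaining parts of your argument --- torsion-freeness forcing every non-trivial element to be non-finitary, and the passage from ``every non-trivial element is cyclic'' to ``no non-trivial proper self-similar subgroup'' --- are fine.
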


\section{Dynamical systems and fractality}
\label{section: fractal groups}

The aim of this last section is first to study the measure-preserving dynamical systems arising from fractal profinite groups and second to prove hypercyclicity of Haar-random elements in super strongly fractal profinite groups. For the latter we shall use Birkhoff's ergodic theorem for free semigroup actions.

\subsection{Fractality and measure-preserving dynamical systems} 

Given a profinite group $G$ we may regard it as a probability space $(G,\mu_G)$. Furthermore, if $G\le \mathrm{Aut}~T$ is self-similar we have a well-defined natural action $\mathcal{T}$ of the free monoid~$T$ on $G$. Indeed, for every $v\in T$ we obtain an operator $\mathcal{T}_v:G\to G$ mapping $g\mapsto g|_v$. Thus, for a self-similar profinite group $G$ the tuple $(G,\mu_G,\mathcal{T},T)$ is a measure-preserving dynamical system if this action $\mathcal{T}$ of $T$ on $G$ is measure-preserving. Let us fix $G\le \mathrm{Aut}~T$ a self-similar profinite group for the remainder of the section.

We start by proving a technical lemma for the family of operators $\{\mathcal{T}_v\}_{v\in V}$ in fractal and super strongly fractal groups. Let us fix first some notation. Let $A\subseteq G_n$ be a set of $n$-patterns. We define $C_A$ the \textit{cone over }$A$ as the open subset
$$C_A:=(\varphi_\emptyset^n)^{-1}(A)\subseteq G.$$

\begin{lemma}
\label{lemma: fractal order of fibers of projections Ti}
    Let $n,m\ge 1$ and $v\in T$ a vertex at level $n$. The following holds:
    \begin{enumerate}[\normalfont(i)]
        \item if $G$ is fractal then
        $$\# \varphi_\emptyset^{n+m}(\mathcal{T}_v^{-1}(\mathrm{St}_G(m)))=|G:\mathrm{st}_G(v)|\cdot|\ker \varphi_v^m:\mathrm{St}_G(n+m)|;$$
        \item if $G$ is super strongly fractal then for any $A\subseteq G_n$, $B\subseteq G_m$ sets of $n$- and $m$-patterns respectively we have
         $$\# \varphi_\emptyset^{n+m}(C_A\cap \mathcal{T}_v^{-1}(C_B))=\# A\cdot \# B\cdot |\mathrm{St}_G(n)\cap \ker \varphi_v^m:\mathrm{St}_G(n+m)|.$$
    \end{enumerate}
\end{lemma}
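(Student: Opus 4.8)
The plan is to reduce both parts to a counting problem inside the finite congruence quotient $G_{n+m}$, and then to carry out that count by slicing the relevant subset according to the $G$-action on the $n$-th level of $T$.

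\emph{Reduction.} First I would observe that the depth-$m$ section $g\mapsto g|_v^m$ factors through $\varphi_\emptyset^{n+m}$: if $k\in\mathrm{St}_G(n+m)$ then, using $(gk)|_v^m=(g|_v^m)(k|_{v^g}^m)$ together with the fact that $v^g$ is a vertex of level $n$ and $k$ fixes $T_{n+m}$, so $k|_{v^g}^m=1$, we get $(gk)|_v^m=g|_v^m$. Consequently both $\mathcal{T}_v^{-1}(\mathrm{St}_G(m))$ and $C_A\cap\mathcal{T}_v^{-1}(C_B)$ are unions of cosets of $\mathrm{St}_G(n+m)$, so the two quantities in the statement equal the cardinalities of their images $\bar X$ and $\bar Y$ in $Q:=G_{n+m}$. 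Writing $\pi_n\colon Q\to G_n$ for the canonical projection, with kernel $L:=\mathrm{St}_G(n)/\mathrm{St}_G(n+m)$, and $\bar g|_v^m$ for the well-defined depth-$m$ section of $\bar g\in Q$, we have $\bar X=\{\bar g\in Q~:~\bar g|_v^m=1\}$ and $\bar Y=\{\bar g\in Q~:~\pi_n(\bar g)\in A,~\bar g|_v^m\in B\}$. I would also record the elementary inclusions $\mathrm{St}_G(n+m)\le\ker\varphi_v^m$ and $\mathrm{St}_G(n+m)\le\mathrm{St}_G(n)\cap\ker\varphi_v^m$, which give meaning to the indices in the statement, and that $\varphi_v^m$ is $\varphi_v$ followed by truncation to $T_m$ (and similarly $\varphi_w^m$ for any vertex $w$).

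\emph{Part (i).} Let $O$ be the $G$-orbit of $v$ on level $n$, of size $|G:\mathrm{st}_G(v)|$; for $w\in O$ fix $\bar r_w\in Q$ with $v^{\bar r_w}=w$, so that $\{\bar g\in Q~:~v^{\bar g}=w\}=Q_v\bar r_w$, where $Q_v\le Q$ is the stabilizer of $v$, equal to $\mathrm{st}_G(v)/\mathrm{St}_G(n+m)$. Since $Q_v$ fixes $v$, the map $\rho\colon Q_v\to\mathrm{Aut}~T_m$, $\bar s\mapsto\bar s|_v^m$, is a homomorphism; its kernel is the image of $\ker\varphi_v^m$ in $Q$, of order $|\ker\varphi_v^m:\mathrm{St}_G(n+m)|$, and, using that $G$ is fractal, its image is $\varphi_v^m(\mathrm{st}_G(v))=G_m$. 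Writing $\bar g=\bar s\bar r_w$ with $\bar s\in Q_v$, the equation $\bar g|_v^m=1$ becomes $\rho(\bar s)=(\bar r_w|_v^m)^{-1}$, and the right-hand side lies in $G_m$ because $r_w|_v\in G$ by self-similarity; hence the solution set is a coset of $\ker\rho$ of size $|\ker\varphi_v^m:\mathrm{St}_G(n+m)|$. Summing over the $|G:\mathrm{st}_G(v)|$ vertices $w\in O$ yields $\#\bar X$, which is (i).

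\emph{Part (ii).} Here I would slice by $A$: $\bar Y=\bigsqcup_{a\in A}\{\bar g\in Q~:~\pi_n(\bar g)=a,~\bar g|_v^m\in B\}$. Fix $a\in A$, choose $\bar c_a\in\pi_n^{-1}(a)$ so that $\pi_n^{-1}(a)=\bar c_aL$, and put $w_a:=v^a$. Since $L$ fixes level $n$ and hence $w_a$, the map $\rho_{w_a}\colon L\to\mathrm{Aut}~T_m$, $\bar\ell\mapsto\bar\ell|_{w_a}^m$, is a homomorphism with kernel of order $|\mathrm{St}_G(n)\cap\ker\varphi_{w_a}^m:\mathrm{St}_G(n+m)|$ and, by super strong fractality, with image $\varphi_{w_a}^m(\mathrm{St}_G(n))=G_m$. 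For $\bar g=\bar c_a\bar\ell$ one computes $\bar g|_v^m=(\bar c_a|_v^m)(\bar\ell|_{w_a}^m)=(\bar c_a|_v^m)\rho_{w_a}(\bar\ell)$, so $\bar g|_v^m\in B$ is equivalent to $\rho_{w_a}(\bar\ell)\in(\bar c_a|_v^m)^{-1}B$; as $\rho_{w_a}$ is onto $G_m$ and $(\bar c_a|_v^m)^{-1}B\subseteq G_m$ has $\#B$ elements, this has exactly $\#B\cdot|\ker\rho_{w_a}|$ solutions. Finally I would use super strong fractality once more to see that $|\varphi_w^m(\mathrm{St}_G(n))|=\#G_m$ for every level-$n$ vertex $w$, so $|\mathrm{St}_G(n)\cap\ker\varphi_w^m:\mathrm{St}_G(n+m)|=|\mathrm{St}_G(n):\mathrm{St}_G(n+m)|/\#G_m$ is independent of $w$ and in particular equals $|\mathrm{St}_G(n)\cap\ker\varphi_v^m:\mathrm{St}_G(n+m)|$. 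Summing $\#B\cdot|\ker\rho_{w_a}|$ over the $\#A$ values of $a$ gives (ii).

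\emph{Main obstacle.} The only real subtleties are the two structural facts that legitimize the finite-quotient bookkeeping: that $g|_v^m$ depends only on the $(n+m)$-pattern of $g$, and that $\bar g\mapsto\bar g|_v^m$ is multiplicative only after restricting to a subgroup fixing the vertex in question — which is precisely why (i) is organized over the orbit of $v$ and (ii) over the $n$-patterns. Everything else is index arithmetic; fractality enters only to identify the image of the relevant homomorphism with $G_m$, and super strong fractality additionally to make the kernel order in (ii) independent of the chosen level-$n$ vertex.
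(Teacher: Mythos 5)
Your proof is correct and follows essentially the same route as the paper: both work modulo $\mathrm{St}_G(n+m)$, slice the set by the orbit of $v$ in part (i) (equivalently, by right cosets of $\mathrm{st}_G(v)$) and by the $n$-patterns in $A$ in part (ii), and invoke fractality, respectively super strong fractality, to see that each slice contributes exactly $|\ker \varphi_v^m:\mathrm{St}_G(n+m)|$, respectively $\#B\cdot|\mathrm{St}_G(n)\cap\ker \varphi_v^m:\mathrm{St}_G(n+m)|$, classes. Your fiber-of-a-homomorphism phrasing additionally makes explicit that the kernel index is independent of the level-$n$ vertex (the paper's argument produces $\ker\varphi_{v^g}^m$ and silently identifies its index with that of $\ker\varphi_v^m$), a small gain in precision rather than a different method.
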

\begin{proof}
    We work modulo $\mathrm{St}_G(n+m)$ without stating it explicitly to simplify notation. Let us prove (i) first. We claim that if $g\in \mathcal{T}_v^{-1}(\mathrm{St}_G(m))$ and $h\in \ker \varphi_{v^g}^m\le \mathrm{st}_G(v^g)$ then $gh$ lies in the same right coset of $\mathrm{st}_G(v)$ as $g$ and $gh\in \mathcal{T}_v^{-1}(\mathrm{St}_G(m))$. Indeed, if $g\in \mathcal{T}_v^{-1}(\mathrm{St}_G(m))$ and $h\in \ker \varphi_{v^g}^m$, then 
    $$(gh)|_v^m:=(g|_v^m)(h|_{v^g}^m)=g|_v^m=1$$
    and therefore $gh\in \mathcal{T}_v^{-1}(\mathrm{St}_G(m))$. Furthermore, since $h\in \ker \varphi_{v^g}^m\le \mathrm{st}_G(v^g)$ we get $\mathrm{st}_G(v)gh=\mathrm{st}_G(v)g$ proving the claim. Now, since if $h\in \mathrm{st}_G(v^g)\setminus \ker \varphi_{v^g}^m $ we clearly have $gh\notin \mathcal{T}_v^{-1}(\mathrm{St}_G(m))$ and for $h_1,h_2\in \ker \varphi_{v^g}^m$ we have $h_1\ne h_2$ if and only if $gh_1\ne gh_2$, our claim implies that for every fixed right coset of $\mathrm{st}_G(v)$ either there is no element or there are exactly $|\ker \varphi_v^m|$ distinct elements in the intersection of $\mathcal{T}_v^{-1}(\mathrm{St}_G(m))$ and this fixed right coset of $\mathrm{st}_G(v)$. Thus it is enough to show that $\mathcal{T}_v^{-1}(\mathrm{St}_G(m))$ contains an element in each right coset of $\mathrm{st}_G(v)$. Let us fix a right coset $\mathrm{st}_G(v)h$. By the fractality of $G$ there exists $\ell\in \mathrm{st}_G(v^h)$ such that $\ell|_{v^h}^m=(h|_v^{m})^{-1}\in G_m$. Thus $g:=h\ell \in \mathcal{T}_v^{-1}(\mathrm{St}_G(m))$ since
    $$g|_v^m=(h\ell)|_v^m=(h|_v^m)(\ell|_{v^h}^m)=1.$$
    Furthermore $\mathrm{st}_G(v)g=\mathrm{st}_G(v)h\ell=\mathrm{st}_G(v)h$ as $\ell\in \mathrm{st}_G(v^h)$ proving (i).

    For part (ii), if $G$ is super strongly fractal we may find an element $g\in G_{n+m}$ such that $g|_\emptyset^n=\sigma$ and $g|_v^m=\tau$ for each pair of elements $\sigma\in G_n$ and $\tau\in G_m$. Indeed, just consider $g:=h\ell$ where $h\in G$ is such that $h|_\emptyset^n=\sigma$ and $\ell\in \mathrm{St}_G(n)$ is such that $\ell|_{v^h}^m=(h|_v^m)^{-1}\tau$. Note that the existence of $h$ and $\ell$ is guaranteed by the super strong fractality of $G$. Therefore, arguing as in (i) (where we take $h\in \mathrm{St}_G(n)\cap \ker \varphi_{v^g}^m$), the number of elements $g\in C_A\cap \mathcal{T}_v^{-1}(C_B)$ such that $g|_\emptyset^n=\sigma$ and $g|_v^m=\tau$ for each distinct pair $\sigma\in A$ and $\tau\in B$ is exactly $|\mathrm{St}_G(n)\cap\ker \varphi_v^m|$, concluding the proof of part (ii).
\end{proof}

Recall that for any $n\ge 1$ and $A\subseteq G_n$ we have 
$$\mu_G(C_A)=\frac{\#A}{|G_n|}$$
by left-translation invariance of $\mu_G$, as $C_A$ is a disjoint union of $\# A$ left-cosets of $\mathrm{St}_G(n)$.

\begin{lemma}
\label{lemma: measure of intersection}
    Let $G$ be super strongly fractal and $m,n\ge 1$. Then for any $A\subseteq G_n$, $B\subseteq G_m$ sets of $n$- and $m$-patterns respectively and any $v\in T$ at level $n$ we have
    $$\mu_G(C_A\cap \mathcal{T}_v^{-1}(C_B))= \mu_G(C_A)\cdot\mu_G(C_B).$$
\end{lemma}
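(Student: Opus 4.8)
The plan is to compute both sides of the asserted equality explicitly, using \cref{lemma: fractal order of fibers of projections Ti}(ii) together with the formula $\mu_G(C_A)=\#A/|G_n|$ recalled just above, and then to reduce the resulting identity to a statement about indices of stabilizers that follows from super strong fractality.

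First I would observe that $C_A\cap\mathcal{T}_v^{-1}(C_B)$ is a union of left cosets of $\mathrm{St}_G(n+m)$. Indeed, the cone $C_A$ depends only on the $n$-pattern of an element, hence a fortiori on its $(n+m)$-pattern; and since $v$ lies at level $n$, the depth-$m$ section $g|_v^m$ is determined by the action of $g$ on the vertices at levels $n,\dots,n+m$, hence only by the $(n+m)$-pattern of $g$, so $\mathcal{T}_v^{-1}(C_B)=\{g\in G:g|_v^m\in B\}$ is also a union of $\mathrm{St}_G(n+m)$-cosets. Therefore, applying \cref{lemma: fractal order of fibers of projections Ti}(ii) and using $|G_{n+m}|=|G:\mathrm{St}_G(n+m)|$,
$$\mu_G\big(C_A\cap\mathcal{T}_v^{-1}(C_B)\big)=\frac{\#\varphi_\emptyset^{n+m}\big(C_A\cap\mathcal{T}_v^{-1}(C_B)\big)}{|G_{n+m}|}=\frac{\#A\cdot\#B\cdot|\mathrm{St}_G(n)\cap\ker\varphi_v^m:\mathrm{St}_G(n+m)|}{|G:\mathrm{St}_G(n+m)|},$$
while $\mu_G(C_A)\cdot\mu_G(C_B)=\dfrac{\#A\cdot\#B}{|G:\mathrm{St}_G(n)|\cdot|G:\mathrm{St}_G(m)|}$. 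Cancelling $\#A\cdot\#B$, the lemma becomes equivalent to the purely group-theoretic identity
$$|\mathrm{St}_G(n)\cap\ker\varphi_v^m:\mathrm{St}_G(n+m)|\cdot|G:\mathrm{St}_G(n)|\cdot|G:\mathrm{St}_G(m)|=|G:\mathrm{St}_G(n+m)|.$$

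To prove this I would work in the subgroup chain $\mathrm{St}_G(n+m)\le\mathrm{St}_G(n)\cap\ker\varphi_v^m\le\mathrm{St}_G(n)\le G$ and use multiplicativity of indices: writing $|G:\mathrm{St}_G(n+m)|=|G:\mathrm{St}_G(n)|\cdot|\mathrm{St}_G(n):\mathrm{St}_G(n)\cap\ker\varphi_v^m|\cdot|\mathrm{St}_G(n)\cap\ker\varphi_v^m:\mathrm{St}_G(n+m)|$ and cancelling the common factors, the identity reduces to $|\mathrm{St}_G(n):\mathrm{St}_G(n)\cap\ker\varphi_v^m|=|G:\mathrm{St}_G(m)|$. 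Here is the only place super strong fractality genuinely enters: since $v$ is at level $n$ we have $\mathrm{St}_G(n)\le\mathrm{st}_G(v)$, so $\varphi_v^m$ restricts to a homomorphism $\mathrm{St}_G(n)\to\mathrm{Aut}~T_m$ with kernel $\mathrm{St}_G(n)\cap\ker\varphi_v^m$, and its image is the projection to $T_m$ of $\varphi_v(\mathrm{St}_G(n))=G$, namely all of $G_m$. Hence $|\mathrm{St}_G(n):\mathrm{St}_G(n)\cap\ker\varphi_v^m|=|G_m|=|G:\mathrm{St}_G(m)|$, which finishes the argument. The computation is entirely bookkeeping with indices; the main (and essentially only) substantive step is this last identification of $\varphi_v^m(\mathrm{St}_G(n))$ with $G_m$ via super strong fractality.
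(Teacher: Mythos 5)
Your proof is correct and follows essentially the same route as the paper's: both compute $\mu_G(C_A\cap\mathcal{T}_v^{-1}(C_B))$ as a count of $(n+m)$-patterns via \cref{lemma: fractal order of fibers of projections Ti}(ii) and then reduce, by multiplicativity of indices along $\mathrm{St}_G(n+m)\le\mathrm{St}_G(n)\cap\ker\varphi_v^m\le\mathrm{St}_G(n)\le G$, to the identity $|\mathrm{St}_G(n):\mathrm{St}_G(n)\cap\ker\varphi_v^m|=|G_m|$, which is exactly where super strong fractality (i.e. $\varphi_v^m(\mathrm{St}_G(n))=G_m$) is used in the paper as well. Your explicit justification that $C_A\cap\mathcal{T}_v^{-1}(C_B)$ is a union of $\mathrm{St}_G(n+m)$-cosets is a detail the paper leaves implicit, but it is not a difference in method.
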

\begin{proof}
Since $G$ is super strongly fractal, we have $\varphi_v^m(\mathrm{St}_G(n))=G_m$. Thus, applying \cref{lemma: fractal order of fibers of projections Ti} we obtain
    \begin{align*}
        \mu_G(C_A\cap \mathcal{T}_v^{-1}(C_B))&=\frac{\# \varphi_\emptyset^{n+m}(C_A\cap \mathcal{T}_v^{-1}(C_B))}{|G_{n+m}|}\\
        &=\frac{\# A}{|G_n|}\cdot \frac{\# B\cdot |\mathrm{St}_G(n)\cap \ker \varphi_v^m:\mathrm{St}_G(n+m)|}{|\mathrm{St}_G(n):\mathrm{St}_G(n+m)|}\\
        &=\frac{\# A}{|G_n|}\cdot \# B\cdot |\mathrm{St}_G(n):\mathrm{St}_G(n)\cap \ker \varphi_v^m|^{-1}\\
    &=\mu_G(C_A)\cdot\frac{\#B}{|\varphi_v^m(\mathrm{St}_G(n))|}\\
    &=\mu_G(C_A)\cdot\frac{\#B}{|G_m|}\\
    &=\mu_G(C_A)\cdot\mu_G(C_B).\qedhere
    \end{align*}
\end{proof}

\begin{proof}[Proof of \cref{Theorem: fractal and ergodic}]
First let us assume $G$ is fractal and let us fix some vertex~$v$ lying at some level $n$. By \cref{corollary: equality of measures} it is enough to check $\mathcal{T}_v$-invariance of $\mu_G$ on the family of level stabilizers $\{\mathrm{St}_G(m)\}_{m\ge 1}$. An application of \cref{lemma: fractal order of fibers of projections Ti} yields 
    \begin{align*}
        \mu_G\big(\mathcal{T}_v^{-1}(\mathrm{St}_G(m))\big)&=\frac{\# \varphi_\emptyset^{n+m}(\mathcal{T}_v^{-1}(\mathrm{St}_G(m)))}{|G_{n+m}|}\\
        &=\frac{|G:\mathrm{st}_G(v)|\cdot |\ker \varphi_v^m:\mathrm{St}_G(n+m)|}{|G_{n+m}|}\\
        &=\frac{|\ker \varphi_v^m:\mathrm{St}_G(n+m)|}{|\mathrm{st}_G(v):\mathrm{St}_G(n+m)|}\\
        &=|\mathrm{st}_G(v):\ker \varphi_v^m|^{-1}\\
        &=|\mathrm{Im}~\varphi_v^m|^{-1},
    \end{align*}
for every $m\ge 1$. Thus by fractality
\begin{align*}
    \mu_G(\mathcal{T}_v^{-1}(\mathrm{St}_G(m)))&=|\mathrm{Im}~\varphi_v^m|^{-1}=|G_m|^{-1}=\mu_G(\mathrm{St}_G(m)).
\end{align*}

Now let us assume $G$ is super strongly fractal. It is enough to check the strong mixing condition on the family of cones $\{C_P\mid P\subseteq G_n \text{ and }n\ge 1\}$, as any measurable subset of $G$ may be approximated (with respect to the Haar measure $\mu_G$) with arbitrary precision by a cone set. Let $A\subseteq G_n$ and $B\subseteq G_m$ for $n,m\ge 1$. For each $k\ge n$, let $A_{k}$ be the unique set of $k$-patterns such that $C_A=C_{A_{k}}$. Then if $v=x_1\dotsb x_k$ is a vertex at the $k$th level of $T$, by \cref{lemma: measure of intersection} we have
$$\mu_G(C_{A_{k}}\cap \mathcal{T}_v^{-1}(C_B))=\mu_G(C_{A_{k}})\cdot\mu_G(C_B).$$
Hence
    \begin{align*}
        \mu_G(C_{A}\cap \mathcal{T}_v^{-1}(C_B))&=\mu_G(C_{A_{k}}\cap \mathcal{T}_v^{-1}(C_B))=\mu_G(C_{A_{k}})\cdot\mu_G(C_B)\\
        &=\mu_G(C_A)\cdot\mu_G(C_B).
    \end{align*}
    Thus the limit
    $$\lim_{k\to \infty}\mu_G(C_{A}\cap \mathcal{T}_{x_1\dotsc x_k}^{-1}(C_B))=\mu_G(C_A)\cdot\mu_G(C_B)$$
    is well-defined and $(G,\mu_G,\mathcal{T},T)$ is strongly mixing.
\end{proof}

\subsection{Hypercyclicity of Haar-random elements}

The first question arising in a measure-preserving dynamical system $(X,\mu,S)$ for a single operator $S$ is how the $S$-iterates of a random point $x\in X$ look like. If $(X,\mu,S)$ is ergodic, then Birkhoff's ergodic theorem yields a nice relation between the time averages and the space averages of the $S$-iterates of any point $x\in X$. Birkhoff's ergodic theorem has been extended from the action of a single operator $S$ to the action of a free group; see \cite{Bufetov, NevoStein}. The case of our interest is the one of free semigroup actions, so we shall use the version of Birkhoff's ergodic theorem in \cite{GrigorchukBirkhoff}; see also \cite{BufetovSemi}.

Let us assume $T$ is the $m$-adic tree, i.e. the free monoid on $m$ generators, and let $\mathbf{p}=\{p_1,\dotsc,p_m\}$ be a probability distribution on the first level of $T$, i.e. on the finite set $\{1,\dotsc,m\}$. We define the \textit{Markov operator} $M$ as the averaging operator
$$M:=\sum_{k=1}^mp_k\mathcal{T}_k.$$
The Markov operator $M$ acts on functions and on measures as
$$(Mf)(x):=\sum_{k=1}^mp_kf(\mathcal{T}_kx)\quad\text{and}\quad M\mu:=\sum_{k=1}^mp_k\mathcal{T}_k\mu,$$
respectively. We say a measure $\mu$ is $M$-\textit{stationary} if $M\mu=\mu$.

Furthermore, for $n\ge 1$ and a function $f$ on $X$ we define the $n$th \textit{Cesàro average} of $f$ as
$$C_nf:=\frac{1}{n}\sum_{|v|\le n}p_v\mathcal{T}_vf,$$
where $p_v$ is the product 
$\prod_{k=1}^{|v|}p_{x_k}$
given by writing $v=x_1\dotsb x_{|v|}\in T$ with each $x_k\in \{1,\dotsc,m\}$.

Now we are in position to state Birkhoff's ergodic theorem for free semigroup actions:

\begin{theorem}[{see \cite[Theorem 1]{GrigorchukBirkhoff}}]
\label{theorem: Birkhoff theorem}
    Let $(X,\mu)$ be a probability space and let $T$ be the free semigroup on $m$ symbols acting via $\mathcal{T}$ on $(X,\mu)$. Assume further that $\mu$ is $M$-stationary. Then for any $1\le p<\infty$ and an arbitrary function $f\in \mathrm{L}^p(X,\mu)$ the Cesàro averages $C_nf$ converge almost continuously to an $M$-invariant function $\widetilde{f}\in \mathrm{L}^p(X,\mu)$ and if $\mu(X)<\infty$, then we have
    $$\int_Xf(x)~d\mu(x)=\int_X\widetilde{f}(x)~d\mu(x).$$
    Furthermore, the almost everywhere defined limit $\widetilde{f}$ is $\mathcal{T}$-invariant.
\end{theorem}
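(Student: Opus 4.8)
The plan is to reduce the free-semigroup Cesàro averages to the ordinary ergodic averages of the single Markov operator $M$ and then invoke the classical ergodic theory of positive $\mathrm{L}^1$-$\mathrm{L}^\infty$ contractions. The key algebraic observation is that the iterates of $M$ recover the level sums of the action: since $p_v=\prod_{k=1}^{|v|}p_{x_k}$ is multiplicative and the operators compose as $\mathcal{T}_w\mathcal{T}_v=\mathcal{T}_{vw}$, one checks by induction on $j$ that
$$M^j=\sum_{|v|=j}p_v\,\mathcal{T}_v,\qquad\text{hence}\qquad C_nf=\frac{1}{n}\sum_{j=0}^{n}M^jf.$$
Thus $C_nf$ differs from the standard Birkhoff average $\frac{1}{n}\sum_{j=0}^{n-1}M^jf$ only by the harmless term $\frac{1}{n}M^nf$ and an asymptotically trivial normalization, so the two sequences share the same limit whenever one of them converges.

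First I would verify that $M$ is a Dunford--Schwartz operator. It is positive, and it contracts $\mathrm{L}^\infty$ trivially, being an average of the composition operators $f\mapsto f\circ\mathcal{T}_k$, each of which is an $\mathrm{L}^\infty$-contraction. It contracts $\mathrm{L}^1$ precisely because $\mu$ is $M$-stationary: positivity gives $|Mf|\le M|f|$ pointwise, while $M$-stationarity gives $\int_X M|f|\,d\mu=\int_X|f|\,d\mu$, so $\|Mf\|_1\le\|f\|_1$. With $M$ a positive contraction on both $\mathrm{L}^1$ and $\mathrm{L}^\infty$, the Dunford--Schwartz pointwise ergodic theorem applies and yields, for every $f\in\mathrm{L}^p(X,\mu)$ with $1\le p<\infty$, that the averages $A_nf:=\frac{1}{n}\sum_{j=0}^{n-1}M^jf$ converge almost everywhere and in $\mathrm{L}^p$ to a limit $\widetilde f\in\mathrm{L}^p(X,\mu)$; by the remark above $C_nf$ converges to the same $\widetilde f$.

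Next I would identify the limit. The $M$-invariance $M\widetilde f=\widetilde f$ is standard: from $A_nf-MA_nf=\tfrac1n(f-M^nf)\to 0$ in $\mathrm{L}^p$ together with continuity of $M$ on $\mathrm{L}^p$. The equality of integrals follows from $\int_X M^jf\,d\mu=\int_X f\,d\mu$ for every $j$ (again $M$-stationarity, now in the dual form $\int_X Mg\,d\mu=\int_X g\,d(M\mu)=\int_X g\,d\mu$), so each $A_nf$ has the same integral as $f$, and passing to the limit is justified by $\mathrm{L}^1$-convergence, using $\mu(X)<\infty$.

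Finally, the delicate point is upgrading $M$-invariance to full $\mathcal{T}$-invariance, namely $\widetilde f\circ\mathcal{T}_k=\widetilde f$ for each $k$. Here I would work in $\mathrm{L}^2$ (reducing by density and the $\mathrm{L}^\infty$-contraction property) and use that each composition operator $U_k\colon f\mapsto f\circ\mathcal{T}_k$ is an $\mathrm{L}^2$-isometry when the action is measure-preserving. From $\widetilde f=M\widetilde f=\sum_k p_kU_k\widetilde f$ and the triangle inequality one obtains $\|\widetilde f\|_2\le\sum_k p_k\|U_k\widetilde f\|_2=\|\widetilde f\|_2$, so the triangle inequality is an equality; strict convexity of the $\mathrm{L}^2$-norm then forces all the vectors $U_k\widetilde f$ to coincide, and comparing with $M\widetilde f=\widetilde f$ gives $U_k\widetilde f=\widetilde f$ for every $k$, which is exactly $\mathcal{T}$-invariance. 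I expect this last step to be the main obstacle: the pointwise convergence rests on the nontrivial Dunford--Schwartz maximal inequality, but the genuinely new feature of the semigroup setting is precisely that a single fixed point of the averaged operator $M$ must already be fixed by each branch $\mathcal{T}_k$, and this is where the measure-preserving structure of the action (rather than mere stationarity) enters.
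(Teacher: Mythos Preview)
The paper does not prove this statement; it is quoted from \cite{GrigorchukBirkhoff} as a black box and immediately applied in the proof of \cref{Theorem: super strongly fractal cyclic}. So there is no in-paper argument to compare against.

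On its own merits your sketch follows the standard route (and indeed the one underlying the cited references): collapse the free-semigroup Ces\`aro averages to the ordinary ergodic averages of the single Markov operator $M$ via the identity $M^{j}=\sum_{|v|=j}p_v\mathcal{T}_v$, verify from $M$-stationarity that $M$ is a positive $L^1$--$L^\infty$ contraction, and invoke the Hopf--Dunford--Schwartz pointwise ergodic theorem. The identification of the limit as $M$-invariant and the preservation of integrals are routine, as you say.

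The one place where your write-up is inaccurate is the final step. You assert that upgrading $M$-invariance of $\widetilde f$ to full $\mathcal{T}$-invariance requires each $\mathcal{T}_k$ to be measure-preserving, so that the $U_k$ become $L^2$-isometries. In fact $M$-stationarity alone suffices, and this matters because the theorem as stated assumes only stationarity. The fix is short: from $M\mu=\mu$ one has $\sum_k p_k\|U_k\widetilde f\|_2^2=\int|\widetilde f|^2\,d(M\mu)=\|\widetilde f\|_2^2$, so by Jensen's inequality $\sum_k p_k\|U_k\widetilde f\|_2\le\|\widetilde f\|_2$; combined with the triangle inequality $\|\widetilde f\|_2=\|M\widetilde f\|_2\le\sum_k p_k\|U_k\widetilde f\|_2$ this forces equality in both. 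Equality in Jensen gives that all the norms $\|U_k\widetilde f\|_2$ coincide, equality in the triangle inequality gives that all $U_k\widetilde f$ are nonnegative multiples of a common vector, and together these yield $U_k\widetilde f=\widetilde f$ for every $k$. (Stationarity also gives $(\mathcal{T}_k)_*\mu\le p_k^{-1}\mu$ whenever $p_k>0$, so each $U_k$ is bounded on every $L^p$ and your density reduction from $L^2$ to general $L^p$ is legitimate.) Thus the step you flagged as the ``main obstacle'' and attributed to the measure-preserving hypothesis actually goes through under stationarity alone.
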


If $\mu(X)<\infty$, the measure-preserving dynamical system $(X,\mu,\mathcal{T}, T)$ is ergodic and $\mathrm{Im}~f\subseteq [0,\infty]\subset \mathbb{R}$, then we further get
$\widetilde{f}$ is constant almost everywhere in \cref{theorem: Birkhoff theorem}. Indeed since $\widetilde{f}$ is $\mathcal{T}$-invariant the subsets 
$$X_\alpha:=\{x\in X~:~ \widetilde{f}(x)\ge \alpha\}$$
are also $\mathcal{T}$-invariant for every $\alpha\ge 0$, and by ergodicity there must exist $\beta\ge 0$ such that $\mu(X_\beta)=1$ and $\mu(X_\alpha)=0$ for every $\beta<\alpha$. Therefore $\widetilde{f}$ is constant almost everywhere and the value $\beta$ is simply given by
$$\beta=\frac{1}{\mu(X)}\int_X f(x)~d\mu(x).$$

For a super strongly fractal profinite group $G$ we consider the strongly mixing measure-preserving dynamical system $(G,\mu_G,\mathcal{T},T)$. In this case $\mu_G(G)=1$. We conclude by proving \cref{Theorem: super strongly fractal cyclic}.

\begin{proof}[Proof of \cref{Theorem: super strongly fractal cyclic}]
    Let us fix $n\ge 1$ and an $n$-pattern $P\in G_n$. It is enough to show that for a Haar-random automorphism $g$ there exists $v\in T$ such that $g|_v^n=P$ almost surely. Indeed, then the $\mathcal{T}$-orbit of $g$ will be dense in $G$ almost surely yielding the hypercyclicity of $g$. Let $A:=C_{P}$ be the cone corresponding to the $n$-pattern~$P$. Then $A$ is measurable in the Borel algebra of $G$ and thus the characteristic function $\chi_A$ is integrable, in other words $\chi_A\in \mathrm{L}^1(G,\mu_G)$. Since $G$ is fractal $\mu_G$ is $\mathcal{T}$-invariant by \cref{Theorem: fractal and ergodic}\textcolor{teal}{(i)} and hence $M$-stationary. Furthermore, since~$G$ is super strongly fractal the measure-preserving dynamical system $(G,\mu_G,\mathcal{T},T)$ is ergodic by \cref{Theorem: fractal and ergodic}\textcolor{teal}{(ii)}. Therefore by \cref{theorem: Birkhoff theorem} the Cesàro averages $C_n \chi_A$ for the uniform distribution $\mathbf{p}=\{1/m,\dotsc,1/m\}$ converge almost everywhere to the constant function $\widetilde{f}\equiv \beta$, where
    $$\beta=\frac{1}{\mu_G(G)}\int_G \chi_A(x)~d\mu_G(x)=\mu_G(A)=|G_n|^{-1}>0.$$
    Hence almost surely $\widetilde{f}(g)=\beta>0$. Since for each $v\in T$ we have $\chi_A(g|_v)=1$ if and only if $g|_v\in A$, we get that almost surely there exist infinitely many vertices $\{v_k\}_{k\ge 1}\subseteq T$ such that $g|_{v_k}^n=P$ as $\lim_{n\to \infty} C_n\chi_A(g)=\beta>0$ almost surely.
\end{proof}



\bibliographystyle{unsrt}

\end{document}